\documentclass[letterpaper, 10 pt, conference]{ieeeconf}  % Comment this line out
                                                          % if you need a4paper
%\documentclass[a4paper, 10pt, conference]{ieeeconf}      % Use this line for a4
                                                          % paper

\IEEEoverridecommandlockouts                              % This command is only
                                                          % needed if you want to
                                                          % use the \thanks command
\overrideIEEEmargins
% See the \addtolength command later in the file to balance the column lengths
% on the last page of the document

% The following packages can be found on http:\\www.ctan.org
%\usepackage{graphics} % for pdf, bitmapped graphics files
%\usepackage{epsfig} % for postscript graphics files
%\usepackage{mathptmx} % assumes new font selection scheme installed
%\usepackage{times} % assumes new font selection scheme installed
%\usepackage{amsmath} % assumes amsmath package installed
%\usepackage{amssymb}  % assumes amsmath package installed

\usepackage{bbm}
\usepackage{mathrsfs}
\usepackage{verbatim}
\usepackage{amsmath}
\usepackage{amsfonts}
\usepackage{bm}
\usepackage{graphicx}
\usepackage{float}

\usepackage{amsthm}
\usepackage{xcolor}
\usepackage[colorinlistoftodos]{todonotes}
\usepackage{mathtools}
\usepackage{cite}
\usepackage{colortbl}
\usepackage{subfigure}

\theoremstyle{definition}
\newtheorem{assumption}{Assumption}
\newtheorem{definition}{Definition}
\newtheorem{remark}{Remark}
\newtheorem{problem}{Problem}

\theoremstyle{plain}

\newtheorem{theorem}{Theorem}
\newtheorem{lemma}{Lemma}

\pagestyle{plain}
\pagenumbering{arabic}

\title{\LARGE \bf Approximate Information States for Worst-case Control of Uncertain Systems}

\author{Aditya Dave, {\itshape{Student Member, IEEE,}} Nishanth Venkatesh, {\itshape{Student Member, IEEE,}}  \\
Andreas A. Malikopoulos, {\itshape{Senior Member, IEEE}} 
	\thanks{This research was supported by the Sociotechnical Systems Center (SSC) at the University of Delaware.} %
	\thanks{The authors are with the Department of Mechanical Engineering, University of Delaware, Newark, DE 19716 USA (email: \texttt{adidave@udel.edu; nish@udel.edu; andreas@udel.edu).}} }

\begin{document}

\maketitle
\thispagestyle{empty}

\begin{abstract}
In this paper, we investigate a worst-case-scenario control problem with a partially observed state. We consider a non-stochastic formulation, where noises and disturbances in our dynamics are uncertain variables which take values in finite sets. In such problems, the optimal control strategy can be derived using a dynamic program (DP) with respect to the memory. The computational complexity of this DP can be improved using a conditional range of the state instead of the memory. We present a more general definition of an information state which is sufficient to construct a DP without loss of optimality, and show that the conditional range is an example of an information state. Next, we extend this notion to define an approximate information state and an approximate DP. We also bound the maximum loss of optimality when using an approximate DP to derive the control strategy. Finally, we illustrate our results in a numerical example.
\end{abstract}

\section{Introduction}
\label{section:Introduction}

A decision maker in a cyber-physical system must generate control actions using noisy observations and uncertain predictions of the state's evolution \cite{kim2012cyber,Dave2020}. %Examples of such systems include social media platforms \cite{Dave2020SocialMedia}, and connected and automated vehicles \cite{malikopoulos2021optimal}. 
Such a decision making problem is usually modelled using one of the following approaches:
(1) Stochastic control: We consider that all uncertainties have known distributions and seek a control strategy which minimizes the expected total cost over a time horizon \cite{varaiya_book}. The resulting control strategy performs optimally on average across multiple runs of the system.
(2) Non-stochastic control: We consider that all uncertainties take values in known sets with unknown distributions and seek a control strategy which minimizes the maximum cost that can be incurred across a time horizon \cite{bernhard2003minimax}. The resulting control strategy is more conservative, but has concrete performance guarantees for any evolution of the system. Thus, this approach is more useful in applications where safety guarantees are critical, or where the probability distributions of external disturbances are unknown a priori.
%is more appropriate in safety-critical applications which require concrete performance guarantees, systems where probability distributions cannot be determined a priori, and systems whose performance needs to be robust against adversarial attacks.
%In this paper, we focus our attention on non-stochastic control problems.
Under either approach, the optimal control strategy can be derived offline using a dynamic program (DP) \cite{bernhard2003minimax}.
%when the decision maker observes the complete state of the system, the optimal control action at any instance of time is simply a function of the current state. 
In general, the optimal action at any time is a function of the historical data in the decision maker's memory. This memory grows with time as more data is added. Subsequently, the domain of the corresponding control strategy grows with time which makes it computationally challenging to solve the DP for long time horizons. In stochastic control, this problem is alleviated by writing the DP using \textit{information states} instead of the memory \cite{subramanian2019approximate}. 
In fact, the notion of an information state is fundamental in the study of stochastic systems has also been extended to reinforcement learning \cite{littman2001predictive, subramanian2022approximate} and decentralized problems \cite{17,  Malikopoulos2021, Dave2022a}. The most frequently used information state for stochastic control is the \textit{belief state}, i.e., a distribution on the feasible states conditioned on the current memory \cite{varaiya_book}. However, in some systems with large state spaces, using an information state imposes severe computational challenges in DP. Thus, recent efforts focus on identifying \textit{approximate information states} \cite{kara2020near,subramanian2022approximate}. 
In contrast, for non-stochastic systems, DP has been simplified using a \textit{conditional range}, i.e., the set of state realizations consistent with the current memory \cite{bertsekas1973sufficiently, bernhard1996separation, bacsar2008h, rasouli2018scalable}. This concept has also been applied to decentralized systems \cite{gagrani2017decentralized, gagrani2020worst, Dave2021b}. However, to the best of our knowledge, there is no general definition of an information state or approximate information state for non-stochastic systems.

The contributions of this paper are (1) we introduce a general definition of an information state (Definition \ref{def_info_state}) along with a proof of the optimality of the resulting DP (Theorem \ref{thm_opt_dp}) and (2) we introduce an approximate information state for non-stochastic systems (Definition \ref{def_approx}), use it to formulate an approximate DP and present an upper bound on the resulting loss in performance (Theorems \ref{thm_approx_term_dp} - \ref{thm_approx_term_policy}). We illustrate our results using numerical simulations.
%While our results are similar to analogous results for stochastic systems, we use different mathematical tools to prove them because non-stochastic systems lack probability distributions.
%we deal with set-valued conditional ranges in the non-stochastic systems rather than probability distributions.
%Also write about Bernhard's approach and Nair's approach to non-stochastic problems. We adapt Nair's notation in our paper.

The remainder of the paper proceeds as follows. In Section \ref{section:problem}, we present our model. In Section \ref{section:info_state}, we define the notion of the information state and the corresponding DP. In Section \ref{section:approx}, we present the approximate information state, approximate DP, and bounds on the approximation loss. In Section \ref{section:example}, we present a numerical example to illustrate our results.
Finally, in Section \ref{section:conclusion}, we draw concluding remarks and discuss ongoing work.

\section{Model}
\label{section:problem}

\subsection{Notation and Preliminaries}

In this paper, we utilize the mathematical framework for \textit{uncertain variables}, which was presented in the context of non-stochastic information theory in \cite{nair2013nonstochastic}. %, where it was presented in the context of non-stochastic information theory. 
An uncertain variable is a non-stochastic analogue of a random variable which take values in a known set and has an unknown distribution. Thus, for a sample space $\Omega$ and a given set $\mathcal{X}$, an uncertain variable is a mapping $X: \Omega \to \mathcal{X}$. For any $\omega \in \Omega$, the uncertain variable has the realization $X(\omega) = x \in \mathcal{X}$. 
The \textit{range} of an uncertain variable is analogous to the distribution of a random variable. The \textit{marginal range} of $X$ is the set $[[X]] \hspace{-1pt} := \hspace{-1pt} \{X(\omega) \; | \; \omega \in \Omega\}$. For two uncertain variables $X \in \mathcal{X}$ and $Y \in \mathcal{Y}$, their \textit{joint range} is the set $[[X,Y]] \hspace{-1pt} := \hspace{-1pt} \big\{ \big(X(\omega), Y(\omega) \big) \; | \; \omega \in \Omega \big\}$. For a given realization $y$ of $Y$, the \textit{conditional range} of $X$ is the set $[[X|y]] \hspace{-1pt} := \hspace{-1pt} \big\{ X(\omega) \; | \; Y(\omega) = y, \; \omega \in \Omega \big\}$ and in general, the conditional range of $X$ given $Y$ is $[[X|Y]] \hspace{-1pt} := \hspace{-1pt} \big\{ [[X|y]]\;| \; y \in [[Y]] \big\}$. %For a given function $f : \mathcal{X} \to \mathbb{R}$, we define the compact notation $\max_{[[X]]} f(X) := \max_{x \in [[X]]} f(x)$. 

Next, consider that the feasible sets $\mathcal{X}, \mathcal{Y}$ are compact, nonempty subsets of a metric space $(\mathcal{S},d)$, where $d(x, y)$ is the distance between any feasible realization $x \in \mathcal{X}$ of the uncertain variable $X$ and $y \in \mathcal{Y}$ of the uncertain variable $Y$. Furthermore, the distance between the sets $\mathcal{X}, \mathcal{Y}$ is measured using the Hausdorff metric \cite[Chapter 1.12]{barnsley2006superfractals}
\begin{align} \label{H_met_def}
    \mathcal{H}(\mathcal{X}, \mathcal{Y}) := \max \{ \max_{x \in \mathcal{X}} \min_{y \in \mathcal{Y}} d(x, y), \max_{y \in \mathcal{Y}} \min_{x \in \mathcal{X}} d(x, y) \}.
\end{align}

\subsection{Problem Formulation}

We consider a partially observed system where an agent selects control actions over $T \in \mathbb{N}$ discrete time steps. At any time $t=0,\dots, T$, the state of the system is denoted by an uncertain variable $X_t$ which takes values in a finite set $\mathcal{X}_t$.
The action at time $t$ is denoted by an uncertain variable $U_t$ which takes values in a finite set $\mathcal{U}_t$. Starting with the initial state $X_0 \in \mathcal{X}_0$, the state evolves as
$X_{t+1}=f_t\left(X_t,U_t,W_t\right)$ for all $t=0,\dots,T-1$. The uncertain variable $W_t$ denotes an uncontrolled disturbance acting on the state at time $t$, which takes values in a finite set $\mathcal{W}_t$. At each $t = 0,\dots,T$, the agent partially observes the state as an uncertain variable $Y_t=h_t(X_t,N_t)$ which takes values in a finite set $\mathcal{Y}_t$. The uncertain variable $N_t$ %:(\Omega, \mathscr{F}) \to (\mathcal{V}^k_t, \mathscr{V}^k_t)$ 
denotes a measurement noise which takes values in a finite set $\mathcal{N}_t$.
The external disturbances $\{W_t: t=0,\dots,T\}$, noises in measurement $\{N_t: t=0,\dots,T\}$, and initial state $X_0$ are collectively called \textit{primitive variables}. We consider that each primitive variable is independent of each other. This ensures that the system evolution is Markovian in a non-stochastic sense \cite{nair2013nonstochastic, bertsekas1973sufficiently}. We also consider that the agent has perfect memory and thus, at each $t=0,\dots,T$, the agent's memory is the set of uncertain variables $M_t := \{Y_{0:t}, U_{0:t-1}\}$ which takes values in a collection of sets $\mathcal{M}_t$. Note that $Y_{0:t} = \{Y_0,\dots,Y_t\}$.
After updating their memory, at each $t$ the agent selects the action
$U_t = g_t(M_t)$ using the control law $g_t: \mathcal{M}_t \to \mathcal{U}_t$. We denote the control strategy by $\boldsymbol{g} := (g_0,\dots,g_T)$ and the set of all feasible control strategies by $\mathcal{G}$. After $T$ time steps, the agent incurs a terminal cost $c_T(X_T, U_T) \in \mathbb{R}_{\geq0}$. We measure the system's performance by the \textit{maximum terminal cost} %across $T$ time steps, defined as
\begin{align} \label{eq_terminal_criterion}
\mathcal{J}(\boldsymbol{g}) := \max_{X_0, W_{0:T}, N_{0:T}} c_T(X_T, U_T).
\end{align}

\begin{problem} \label{problem_1}
The optimization problem is $\min_{\boldsymbol{g} \in \mathcal{G}} \mathcal{J}(\boldsymbol{g}),$
given the feasible sets $\{\mathcal{X}_0, \mathcal{W}_{t}, \mathcal{N}_{t} : t = 0,\dots,T\}$, the system dynamics $\{f_t, h_t: t=0,\dots,T\}$ and the terminal cost function $c_T$.
\end{problem}

Our aim is to tractably compute an optimal control strategy $\boldsymbol{g}^* \in \mathcal{G}$ for Problem \ref{problem_1}. This strategy is guaranteed to exist because all variables take values in finite sets. In our modeling framework, we impose the following assumptions:

\begin{assumption} \label{assumption_1}
We consider that the feasible sets $\{\mathcal{X}_t, \mathcal{Y}_t : t=0,\dots,T\}$ are both subsets of metric spaces.
\end{assumption}

Assumption \ref{assumption_1} allows us to measure the distance between any two realizations of $X_t$ and $Y_t$, respectively, for all $t$. To this end, we denote a generic metric by $d(\cdot, \cdot)$.

%ensures that we can quantify the distance between two realizations of any uncertain variable in the system dynamics. As an illustration, consider two possible realizations $x^1_t, x^2_t \in \mathcal{X}_t$ of the uncertain variable $X_t$. The distance between the realizations is measured using the metric for $\mathcal{X}_t$ and is denoted by  $d(x^1_t, x^2_t)$.

\begin{assumption} \label{assumption_2}

We consider that all uncertain variables at each $t$ and the cost $c_T(X_T, U_T)$ have a finite maximum value.

\end{assumption}

Since all feasible sets are finite, Assumption \ref{assumption_2} ensures that the functions $\{f_t, h_t : t=0,\dots,T\}$ and $c_T$ are globally Lipschitz. To this end, we will denote the Lipschitz constant of a function $f_t$ by $L_{f_t} \in \mathbb{R}_{\geq0}$.

\begin{remark}
While we derive our results for terminal cost problems, we also present an extension of our results to additive cost problems in Subsection \ref{subsection:info_examples}.
\end{remark}

\section{Dynamic Programs and Information States} \label{section:info_state}

In this section, we present a DP to derive the optimal control strategy for Problem \ref{problem_1}, and then define an information state which can simplify it. For all $t$, for each possible realization $m_t \in \mathcal{M}_t$ and $u_t \in \mathcal{U}_t$ of the memory $M_t$ and action $U_t$, respectively, we recursively define the \textit{value functions} of the DP at $t=0,\dots,T-1$ as
\begin{align}
    Q_t(m_t, u_t) := &\max_{m_{t+1} \in [[M_{t+1}|m_t, u_t]]} V_{t+1}(m_{t+1}), \label{DP_basic_1}\\
    V_t(m_t) := &\min_{u_t \in \mathcal{U}_t} Q_t(m_t, u_t), \label{DP_basic_2}
\end{align}
and $Q_T(m_{T},u_T) := \max_{x_T \in [[X_T|m_T]]}$ $ c_T(x_T,u_T)$ and $V_{T}(m_{T}) := \min_{u_T \in \mathcal{U}_T} Q_T(m_{T},u_T)$ at time $T$.
The control law at each $t$ is $g_t^*(m_t) := \arg \min_{u_t \in \mathcal{U}_t}$ $Q_t(m_t, u_t)$. %, i.e., the $\arg \min$ in the RHS of \eqref{DP_basic_2}. 
The control strategy $\boldsymbol{g}^* = (g_0^*,\dots,$ $g_T^*)$ can be shown to be the optimal solution to Problem \ref{problem_1} using standard arguments \cite{bernhard2003minimax}. Note that at each $t$, the optimization in the RHS of \eqref{DP_basic_2} must be solved for each possible $m_t \in \mathcal{M}_t$. As $t$ increases, the size of the $\mathcal{M}_t$ increases with the addition of new information. Subsequently, a large number of computations are required to solve the DP for a long time horizon $T$. This issue motivates us to seek an uncertain variable, called an information state, which can be used in the DP instead of the memory, without loss of optimality.

\subsection{Information States}

%While the DP \eqref{DP_basic_1} - \eqref{DP_basic_2} yields the optimal control strategy for Problem \ref{problem_1}, it is not computationally efficient because it requires computing an optimal control action $u_t$ for each realization of the memory $m_t$ at each $t=0,\dots,T$. 
In this subsection, we define an information state which is sufficient to construct a DP and prove that it yields an optimal control strategy.

\begin{definition} \label{def_info_state}
An \textit{information state} for Problem \ref{problem_1} at each $t=0,\dots,T$ is an uncertain variable $\Pi_t= \sigma_t(M_t)$ taking values in a finite set $\mathcal{P}_t$ and generated by $\sigma_t: \mathcal{M}_t \to \mathcal{P}_t$. Furthermore, for all $m_t \in \mathcal{M}_t$ and $u_t \in \mathcal{U}_t$, and for all $t=0,\dots,T$, it satisfies the following properties:

1) \textit{Sufficient to evaluate terminal cost:}
\begin{gather}
    \hspace{-10pt} \max_{x_T \in [[X_T|m_T]]} c_T(x_T, u_T) = \hspace{-5pt} \max_{x_T \in [[X_T|\sigma_T(m_T)]]} \hspace{-5pt} c_T(x_T, u_T). \label{p1}
\end{gather}

2) \textit{Sufficient to predict itself:}
\begin{gather}
    [[\Pi_{t+1}|m_t, u_t]] = [[\Pi_{t+1}|\sigma_t(m_t), u_t]]. \label{p2}
\end{gather}
\end{definition}

In the corresponding DP, for all $t$, and for all  $\pi_t \in \mathcal{P}_t$ and $u_t \in \mathcal{U}_t$, we recursively define the value functions at $t=0,\dots,T-1$ as
\begin{align}
    \bar{Q}_t(\pi_t, u_t) := &\max_{\pi_{t+1} \in [[\Pi_{t+1}|\pi_t, u_t]]} \bar{V}_{t+1}(\pi_{t+1}), \label{DP_info_1}\\
    \bar{V}_t(\pi_t) := &\min_{u_t \in \mathcal{U}_t} \bar{Q}_t(\pi_t, u_t), \label{DP_info_2}
\end{align}
and $\bar{Q}_T(\pi_{T},u_T) := \max_{x_T \in [[X_T|\pi_T]]}$ $ c_T(x_T,u_T)$ and $V_{T}(\pi_{T}) := \min_{u_T \in \mathcal{U}_T} Q_T(\pi_{T},u_T)$  at time $T$. The control law at each $t$ is $g_t^*(\pi_t) := \arg \min_{u_t \in \mathcal{U}_t} \bar{Q}_t(\pi_t, u_t)$. Next, we prove that the DP decomposition with information states \eqref{DP_info_1} - \eqref{DP_info_2} yields the same optimal value as the DP decomposition in \eqref{DP_basic_1} - \eqref{DP_basic_2} that uses the system's memory at each $t=0,\dots,T$. 

\begin{theorem} \label{thm_opt_dp}
Let $\Pi_t = \sigma_t(M_t)$ be an information state for Problem \ref{problem_1} for all $t=0,\dots,T$. Then, for all $m_t \in \mathcal{M}_t$ and $u_t \in \mathcal{U}_t$, ${Q}_t(m_t, u_t) = \bar{Q}_t(\sigma_t(m_t), u_t)$ and $V_t(m_t) = \bar{V}_t(\sigma_t(m_t))$.
\end{theorem}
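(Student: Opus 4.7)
The plan is to prove both equalities simultaneously by backward induction on $t$, running from $t=T$ down to $t=0$. The two defining properties of an information state in Definition \ref{def_info_state} line up exactly with the two recursion steps: Property 1 (equation \eqref{p1}) services the base case at $t=T$, and Property 2 (equation \eqref{p2}) drives the inductive step.

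For the base case, comparing the definitions of $Q_T$ and $\bar{Q}_T$ shows that the only difference between them is that the maximum is taken over $[[X_T|m_T]]$ versus $[[X_T|\sigma_T(m_T)]]$; Property 1 asserts precisely that these two maxima of $c_T(\cdot, u_T)$ agree, so $Q_T(m_T, u_T) = \bar{Q}_T(\sigma_T(m_T), u_T)$ for every $u_T$, and taking the minimum over $u_T \in \mathcal{U}_T$ delivers $V_T(m_T) = \bar{V}_T(\sigma_T(m_T))$. For the inductive step, assuming $V_{t+1}(m_{t+1}) = \bar{V}_{t+1}(\sigma_{t+1}(m_{t+1}))$ for every $m_{t+1} \in \mathcal{M}_{t+1}$, I would substitute this into \eqref{DP_basic_1} to obtain $Q_t(m_t, u_t) = \max_{m_{t+1} \in [[M_{t+1}|m_t, u_t]]} \bar{V}_{t+1}(\sigma_{t+1}(m_{t+1}))$. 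Because the integrand depends on $m_{t+1}$ only through $\sigma_{t+1}(m_{t+1})$, this max collapses to a max over the image $\{\sigma_{t+1}(m_{t+1}) : m_{t+1} \in [[M_{t+1}|m_t, u_t]]\}$, which is by definition $[[\Pi_{t+1}|m_t, u_t]]$. Property 2 then rewrites this conditional range as $[[\Pi_{t+1}|\sigma_t(m_t), u_t]]$, yielding $Q_t(m_t, u_t) = \bar{Q}_t(\sigma_t(m_t), u_t)$. Minimizing over $u_t$ closes the induction.

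The main obstacle, or at least the only step that demands care, is the collapse of the maximization domain from $[[M_{t+1}|m_t, u_t]]$ to $[[\Pi_{t+1}|\sigma_t(m_t), u_t]]$. This reduction needs two ingredients stitched together: first, the inductive hypothesis, which guarantees that $\bar{V}_{t+1}$ factors through $\sigma_{t+1}$ and hence that the max depends only on the image of $\sigma_{t+1}$; second, Property 2, which guarantees that this image is itself a function of $\sigma_t(m_t)$ rather than of $m_t$. Everything else is a mechanical unwinding of the definitions, so once this pivot is handled cleanly the rest of the argument is routine.
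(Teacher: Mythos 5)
Your proposal is correct and matches the paper's own proof essentially step for step: the same backward induction, the same use of property \eqref{p1} at $t=T$, and the same inductive pivot that identifies $[[\Pi_{t+1}|m_t,u_t]]$ with the image $\{\sigma_{t+1}(m_{t+1}) : m_{t+1} \in [[M_{t+1}|m_t,u_t]]\}$ before invoking property \eqref{p2}. No gaps; nothing further to add.
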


\begin{proof}
Let $m_t \in \mathcal{M}_t$ and $u_t \in \mathcal{U}_t$ be given realizations of $M_t$ and $U_t$, respectively, for all $t$. We prove the result using mathematical induction, starting with $T$ where $Q_{T}(m_{T}, u_T) = \max_{x_T \in [[X_T|m_T]]} c_T(x_T, u_T) = \max_{x_T \in [[X_T|\sigma_T(m_T)]]} c_T(x_T, u_T) = \bar{Q}_{T}(\sigma_{T}(m_{T}), u_T)$ holds as a direct result of \eqref{p1} in Definition \ref{def_info_state}. Subsequently, by taking the minimum on both sides with respect to $u_t \in \mathcal{U}_t$, it holds that $V_T(m_T) = \bar{V}_T(\sigma_T(m_T))$. With this as the basis, for any $t=0,\dots,T-1$, we consider the induction hypothesis $V_{t+1}(m_{t+1}) = \bar{V}_{t+1}(\sigma_{t+1}(m_{t+1}))$. Next, we first prove that ${Q}_t(m_t, u_t) = \bar{Q}_t(\sigma_t(m_t), u_t)$ at $t$ by showing that the RHS of \eqref{DP_basic_1} is equal to the RHS of \eqref{DP_info_1}. Using the induction hypothesis in the RHS of \eqref{DP_basic_1},
$\max_{ m_{t+1} \in [[M_{t+1}|m_t, u_t]]}V_{t+1}(m_{t+1})
 = \max_{m_{t+1} \in [[M_{t+1}|m_t, u_t]]}\bar{V}_{t+1}(\sigma_{t+1}(m_{t+1}))
 = \max_{\sigma_{t+1}(m_{t+1}) \in [[\Pi_{t+1}|\sigma_t(m_t), u_t]]} \bar{V}_{t+1}(\sigma_{t+1}(m_{t+1})),$
where, in the second equality, we use the fact that $[[\Pi_{t+1}|m_t, u_t]] = \big\{\sigma_{t+1}(m_{t+1}) \in \mathcal{P}_{t+1}\big| m_{t+1} \in [[M_{t+1}|$ $m_t,u_t]]\big\}$ and \eqref{p2}. Thus, at time $t$, it holds that $Q_t(m_t, u_t) = \bar{Q}_t(\sigma_t(m_t), u_t)$. Subsequently, we can prove $V_t(m_t) = \bar{V}_t(\sigma_t(m_t))$ by minimizing both sides with $u_t \in \mathcal{U}_t$. This proves the induction hypothesis at time $t$, and the result follows by mathematical induction.
\end{proof}

Theorem \ref{thm_opt_dp} implies that the control strategy computed using \eqref{DP_info_1} - \eqref{DP_info_2} is an optimal solution to Problem \ref{problem_1}. In practice, using the information state in the DP decomposition only improves computational tractability when the set $\mathcal{P}_t$ has fewer elements than $\mathcal{M}_t$ for most time steps. This is usually true for systems with long time horizons. 

\subsection{Examples of Information States} \label{subsection:info_examples}

In this subsection, we present examples of information states which satisfy the conditions in Definition \ref{def_info_state}.

\textit{1) Perfectly Observed Systems:} Consider a system where $Y_t = X_t$ for all $t=0,\dots,T$. Then, the state is a valid information state \cite{bernhard2003minimax}, i.e., $\Pi_t = X_t$, which takes values in the set $\mathcal{X}_t$  and satisfies \eqref{p1} - \eqref{p2} for all $t$.

\textit{2) Partially Observed Systems:} The conditional range $\Pi_t = [[X_t|M_t]]$ is an information state for each $t=0,\dots,T$ in any partially observed system \cite{bernhard2003minimax}. This is a set-valued uncertain variable which takes values in the power set $2^{\mathcal{X}_t}$. Explicitly, for a given realization of the memory $m_t \in \mathcal{M}_t$ at time $t$, the conditional range takes the realization $P_t := \{ x_t \in \mathcal{X}_t | \exists x_0 \in \mathcal{X}_0, w_{0:t-1} \in \prod_{\ell = 0}^{t-1} \mathcal{W}_\ell, n_{0:t} \in \prod_{\ell = 0}^{t} \mathcal{N}_\ell \text{ such that } y_{t} = h_{t}(x_{t}, n_{t}), x_{\ell+1} = f_{\ell}(x_{\ell}, u_{\ell},w_{\ell}),$ $y_{\ell} = h_{\ell}(x_{\ell}, n_{\ell}) \text{ for all } \ell = 0, \dots, t-1 \}.$ We denote the realization by $P_t$ instead of $\pi_t$ to highlight that it is a set.

\textit{3) Additive Cost Problems:} Consider a partially observed system where the agent incurs a cost $c_t(X_t, U_t) \in \mathbb{R}_{\geq0}$ at each $t=0,\dots,T$ and the performance is measured by an additive performance criterion $\mathcal{J}^\text{ad}(\boldsymbol{g}) := \max_{X_0, W_{0:T}, N_{0:T}} \sum_{t=0}^T c_t(X_t, U_t).$
We can construct a DP and an information state for an additive cost problem by recasting it as a terminal cost problem \cite{bertsekas1973sufficiently}. At $t=0$, we define $A_0 := 0$ and for all $t=1,\dots,T$, we recursively define an uncertain variable $A_t \in \mathcal{A}_t$ as $A_t := A_{t-1} + c_{t-1}(X_{t-1}, U_{t-1})$. %Note that $A_t$ tracks the cost incurred by the system up to time $t$, i.e., before the action $U_t$ has been implemented.  
Then, at each $t$, we consider an augmented state $S_t = (X_t, A_t)$ and note that it yields a terminal cost $A_T + c_T(X_T,U_T)$. Thus, we can derive the optimal control strategy using the terminal cost DP. The information state at time $t$ is the conditional range $\Pi_t = [[X_t, A_t | M_t]]$ which takes values in the power set $2^{\mathcal{X}_t\times \mathcal{A}_t}$.

\begin{remark}
While the conditional range is an information state for all partially observed systems, the more general conditions in Definition \ref{def_info_state} can enable us to identify simpler information states for cases like systems with perfect observation. However, in many applications we may seek to construct an information state using limited data or seek to improve computational tractability of the DP by approximation. To account for these applications, in Section \ref{section:approx}, we extend Definition \ref{def_info_state} to define approximate information states.
\end{remark}

\section{Approximate Information State} \label{section:approx}

In this section, we define an approximate information state by relaxing the conditions in Definition \ref{def_info_state}. Then, we use it to develop an approximate DP and derive an upper bound on the resulting loss of optimality.

\begin{definition} \label{def_approx}
An \textit{approximate information state} for Problem \ref{problem_1}  is an uncertain variable $\hat{\Pi}_t = \hat{\sigma}_t(M_t)$, at each $t=0,\dots,T$, taking values in a finite set $\hat{\mathcal{P}}_t$ and generated by $\hat{\sigma}_t : \mathcal{M}_t \to \hat{\mathcal{P}}_t$. Furthermore, for all $t$, there exist parameters $\epsilon_T, \delta_t \in \mathbb{R}_{\geq0}$ such that for all $m_t \in \mathcal{M}_t$ and $u_t \in \mathcal{U}_t$ it is:

\textit{1) Sufficient to approximate terminal cost:}
\begin{multline}
    \hspace{-8pt} | \hspace{-3pt} \max_{x_T \in [[X_T|m_T]]} \hspace{-8pt} c_T(x_T, u_T) - \hspace{-8pt} \max_{x_T \in [[X_T|\hat{\sigma}_T(m_T)]]} \hspace{-4pt}  c_T(x_T, u_T)| \\
    \leq \epsilon_T. \hspace{-2pt} \label{ap1}
\end{multline}

\textit{2) Sufficient to approximate evolution:} We define $\mathcal{K}_{t+1} := [[ \hat{\Pi}_{t+1} ~|~ m_t, u_t]]$ and
    $\hat{\mathcal{K}}_{t+1} := [[ \hat{\Pi}_{t+1} ~|~ \hat{\sigma}_t(m_t), u_t ]]$. Then,
\begin{gather}
    \mathcal{H}(\mathcal{K}_{t+1}, \hat{\mathcal{K}}_{t+1}) \leq \delta_t, \label{ap2}
\end{gather}
where recall that $\mathcal{H}(\cdot)$ is the Hausdorff metric from \eqref{H_met_def}.
\end{definition}

In the approximate DP, for all $t=0,\dots,T-1$, for all $\hat{\pi}_t \in \hat{\mathcal{P}}_t$ and $u_t \in \mathcal{U}_t$, we recursively define the value functions
\begin{align}
    \hat{Q}_t(\hat{\pi}_t, u_t) := &\max_{\hat{\pi}_{t+1} \in [[\hat{\Pi}_{t+1}|\hat{\pi}_t, u_t]]} \label{DP_ap_term_1} \hat{V}_{t+1}(\hat{\pi}_{t+1}), \\
    \hat{V}_t(\hat{\pi}_t) := &\min_{u_t \in \mathcal{U}_t} \hat{Q}_t(\hat{\pi}_t, u_t), \label{DP_ap_term_2}
\end{align}
and $\hat{Q}_T(\hat{\pi}_{T},u_T) := \max_{x_T \in [[X_T|\hat{\pi}_T]]} c_T(x_T,u_T)$ and $\hat{V}_{T}($ $\hat{\pi}_{T}):= \min_{u_T \in \mathcal{U}_T} \hat{Q}_T(\hat{\pi}_{T},u_T)$ at time $T$. The control law at each $t$ is $\hat{g}_t^*(\hat{\pi}_t) := \arg \min_{u_t \in \mathcal{U}_t} \hat{Q}_t(\hat{\pi}_t, u_t)$ and the approximately optimal strategy is $\boldsymbol{\hat{g}}^* = (\hat{g}^*_0, \dots, \hat{g}_T^*)$. Next, in Theorem \ref{thm_approx_term_dp}, we establish an error bound when the value functions for the optimal DP \eqref{DP_basic_1} - \eqref{DP_basic_2} are approximated by \eqref{DP_ap_term_1} - \eqref{DP_ap_term_2}. We begin with two preliminary lemmas.

%in this section, we will prove the general bounds for the error in utilizing the approximate DP \eqref{DP_approx_1} - \eqref{DP_approx_2}.

\begin{lemma} \label{lem_prelim}
Consider a metric space $(\mathcal{A},d)$ and two finite subsets $\mathcal{A}, \mathcal{B} \subset \mathcal{X}$. Let $f: \mathcal{X} \to \mathbb{R}$ be a function with a global Lipschitz constant $L_f \in \mathbb{R}_{\geq0}$. Then,
\begin{gather}
    \big| \max_{a \in \mathcal{A}} f(a) - \max_{b \in \mathcal{B}} f(b) \big| \leq L_f \cdot \mathcal{H}(\mathcal{A}, \mathcal{B}). \label{eq_prelim}
\end{gather}
\end{lemma}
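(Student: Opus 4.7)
The plan is to reduce the two-sided bound to a one-sided argument by symmetry. Without loss of generality I would assume $\max_{a \in \mathcal{A}} f(a) \geq \max_{b \in \mathcal{B}} f(b)$; the reverse case is identical after swapping the roles of $\mathcal{A}$ and $\mathcal{B}$, and the final bound is symmetric in $\mathcal{A}, \mathcal{B}$ because the Hausdorff metric $\mathcal{H}$ is symmetric by definition in \eqref{H_met_def}.

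Having made that reduction, let $a^{*} \in \arg\max_{a \in \mathcal{A}} f(a)$, which exists since $\mathcal{A}$ is finite. The next step is to pick a good surrogate in $\mathcal{B}$: by the definition of the Hausdorff metric, $\min_{b \in \mathcal{B}} d(a^{*}, b) \leq \max_{a \in \mathcal{A}} \min_{b \in \mathcal{B}} d(a, b) \leq \mathcal{H}(\mathcal{A}, \mathcal{B})$, so, again using finiteness of $\mathcal{B}$, there exists $b' \in \mathcal{B}$ with $d(a^{*}, b') \leq \mathcal{H}(\mathcal{A}, \mathcal{B})$. Applying the global Lipschitz property of $f$ gives $f(a^{*}) - f(b') \leq L_f \cdot d(a^{*}, b') \leq L_f \cdot \mathcal{H}(\mathcal{A}, \mathcal{B})$.

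I would then chain this with the trivial inequality $f(b') \leq \max_{b \in \mathcal{B}} f(b)$ to obtain
\begin{equation*}
    0 \leq \max_{a \in \mathcal{A}} f(a) - \max_{b \in \mathcal{B}} f(b) = f(a^{*}) - \max_{b \in \mathcal{B}} f(b) \leq f(a^{*}) - f(b') \leq L_f \cdot \mathcal{H}(\mathcal{A}, \mathcal{B}),
\end{equation*}
which establishes \eqref{eq_prelim} under the WLOG assumption. The reverse case yields the same bound, so the absolute value bound in \eqref{eq_prelim} follows.

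I do not anticipate any real obstacle here: the argument is essentially the standard fact that maxima of Lipschitz functions are Hausdorff-continuous in their domain sets. The only minor care needed is that the Hausdorff metric as defined in \eqref{H_met_def} bounds both the worst one-step distance from $\mathcal{A}$ to $\mathcal{B}$ and from $\mathcal{B}$ to $\mathcal{A}$, which is precisely what makes the symmetric WLOG reduction legitimate; I would note this explicitly when invoking it.
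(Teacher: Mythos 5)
Your proof is correct and follows essentially the same route as the paper's: reduce to the one-sided case where $\max_{a\in\mathcal{A}} f(a) \geq \max_{b\in\mathcal{B}} f(b)$, match the maximizing point of $\mathcal{A}$ to a nearby point of $\mathcal{B}$ guaranteed by the Hausdorff metric, and apply the Lipschitz bound. Your version is slightly more streamlined (you work with a single maximizer $a^{*}$ and its nearest neighbor $b'$, whereas the paper routes through the auxiliary set $\mathcal{A}^1$ and a max--min identity), but the underlying argument is the same.
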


\begin{proof}
We prove this result by considering two cases which are mutually exclusive but cover all the possibilities.
Case 1: $\max_{a \in \mathcal{A}} f(a) \geq \max_{b \in \mathcal{B}} f(b)$, which implies $| \max_{a \in \mathcal{A}} f(a) - \max_{b \in \mathcal{B}} f(b) | = \max_{a \in \mathcal{A}} f(a) - \max_{b \in \mathcal{B}} f(b) $. We define the non-empty set $\mathcal{A}^1 := \{a \in \mathcal{A} | f(a) \geq \max_{b \in \mathcal{B}} f(b)\}$ and note that $\max_{a \in \mathcal{A}} f(a) - \max_{b \in \mathcal{B}} f(b) = \max_{a \in \mathcal{A}^1} f(a) - \max_{b \in \mathcal{B}} f(b) = \max_{a \in \mathcal{A}^1} \min_{b \in \mathcal{B}}(f(a) - f(b)) \leq \max_{a \in \mathcal{A}} \min_{b \in \mathcal{B}}|f(a) - f(b)| \leq L_f \cdot \max_{a \in \mathcal{A}} \min_{b \in \mathcal{B}}|a -b|$. We can complete the proof for case 1 by invoking the definition of the Hausdorrf metric in \eqref{H_met_def} to conclude that
$| \max_{a \in \mathcal{A}} f(a) - \max_{b \in \mathcal{B}} f(b) | \leq L_f \cdot \max_{a \in \mathcal{A}} \min_{b \in \mathcal{B}}|a - b|
    \leq L_f \cdot \mathcal{H}(\mathcal{A}, \mathcal{B}).$
Case 2: $\max_{a \in \mathcal{A}} f(a) < \max_{b \in \mathcal{B}} f(b)$ and can prove the result using similar arguments as case 1. 
%Then, we can complete the proof following the same steps as case 1 by using the non-empty set $\mathcal{B}^1 := \{b \in \mathcal{B} | f(b) > \max_{a \in \mathcal{A}} f(a)\}$.
%and let $\hat{A}^2 = \{a^2 \in A^2 ~|~ f(a^2) \geq \max_{a^1 \in {A}^1} f(a^1)\}$. Then, using similar steps as Case 1, we can show that $\max_{a^2 \in {A}^2} \min_{a^1 \in {A}^1} |f(a^1) - f(a^2)| \geq \max_{a^2 \in {A}^2} f(a^2) - \max_{a^1 \in {A}^1} f(a^1)$. Thus, we establish the result for Case 2 using \eqref{prelim_2}.
%Using this, we write that
%\begin{align}
%    &\max_{a \in \mathcal{A}} f(a) - \max_{b \in \mathcal{B}} f(b) &&= \max_{a \in \mathcal{A}^1} f(a) - \max_{b \in \mathcal{B}} f(b) \nonumber \\
%    &\leq \max_{a \in \mathcal{A}^1} \min_{b \in \mathcal{B}}|f(a) - f(b)| 
%    &&\leq \max_{a \in \mathcal{A}} \min_{b \in \mathcal{B}}|f(a) - f(b)| \nonumber \\
%    &\leq L_f \cdot \max_{a \in \mathcal{A}} \min_{b \in \mathcal{B}}|a-b| &&\leq L_f \cdot \mathcal{H}(\mathcal{A},\mathcal{B}),
%\end{align}
%First, note that using the Lipschitz continuity of $f$ and the definition of the Hausdroff metric,
%\begin{multline} \label{prelim_1}
%    \max_{a^1 \in {A}^1} \min_{a^2 \in {A}^2} |f(a^1) - f(a^2)| \leq \max_{a^1 \in {A}^1} \min_{a^2 \in {A}^2} L_f \cdot |a^1 - a^2| \\
%    \leq L_f \cdot \mathcal{H}(A^1, A^2),
%\end{multline}
%and
%\begin{gather} \label{prelim_2}
%    \max_{a^2 \in {A}^2} \min_{a^1 \in {A}^1} |f(a^1) - f(a^2)| \leq L_f \cdot \mathcal{H}(A^1, A^2).
%\end{gather}
\end{proof}

\begin{lemma} \label{lem_prelim_2}
Consider a finite set $\mathcal{X}$ and two functions $f:\mathcal{X} \to \mathbb{R}$ and $g:\mathcal{X} \to \mathbb{R}$ with bounded outputs. Then,
\begin{align}
    |\max_{x \in \mathcal{X}} f(x) - \max_{x \in \mathcal{X}} g(x)| &\leq \max_{x \in \mathcal{X}}| f(x) - g(x)|, \label{prelim_2_1} \\
    |\min_{x \in \mathcal{X}} f(x) - \min_{x \in \mathcal{X}} g(x)| &\leq \max_{x \in \mathcal{X}}| f(x) - g(x)|. \label{prelim_2_2}
\end{align}
\end{lemma}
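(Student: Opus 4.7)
The plan is to prove each inequality separately by a short case analysis on which of the two optima is larger, then reduce the absolute difference to a single pointwise difference that is obviously dominated by the maximum pointwise difference. Since $\mathcal{X}$ is finite and $f, g$ are bounded, all maximizers and minimizers exist, so I may work with explicit optimizers rather than with suprema.

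For inequality \eqref{prelim_2_1}, I would let $x_f^\ast \in \argmax_{x \in \mathcal{X}} f(x)$ and $x_g^\ast \in \argmax_{x \in \mathcal{X}} g(x)$ and split into two mutually exclusive cases. In Case 1, assume $\max_x f(x) \geq \max_x g(x)$; then dropping the absolute value and using $g(x_f^\ast) \leq g(x_g^\ast) = \max_x g(x)$ gives
\begin{align*}
\bigl|\max_x f(x) - \max_x g(x)\bigr| &= f(x_f^\ast) - g(x_g^\ast) \\
&\leq f(x_f^\ast) - g(x_f^\ast) \\
&\leq \max_{x \in \mathcal{X}} |f(x) - g(x)|.
\end{align*}
Case 2 ($\max_x f(x) < \max_x g(x)$) is symmetric, with the roles of $f$ and $g$ swapped, and evaluates the pointwise bound at $x_g^\ast$.

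For inequality \eqref{prelim_2_2}, the argument is the mirror image at the minimizers. In the case $\min_x f(x) \geq \min_x g(x)$, I would use $f(x_g^{\ast\ast}) \geq f(x_f^{\ast\ast}) = \min_x f(x)$ to obtain
\begin{align*}
\bigl|\min_x f(x) - \min_x g(x)\bigr| &= f(x_f^{\ast\ast}) - g(x_g^{\ast\ast}) \\
&\leq f(x_g^{\ast\ast}) - g(x_g^{\ast\ast}) \\
&\leq \max_{x \in \mathcal{X}} |f(x) - g(x)|,
\end{align*}
with the reverse case handled symmetrically. No real obstacle is anticipated here; the only subtlety is remembering to evaluate the ``wrong'' function at the other's optimizer (the $\arg\max$ of $f$ for \eqref{prelim_2_1}, but the $\arg\min$ of $g$ for \eqref{prelim_2_2}) so that the monotonicity inequality points in the useful direction. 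Unlike Lemma \ref{lem_prelim}, neither Lipschitz continuity nor the Hausdorff metric is invoked, because $f$ and $g$ share the same domain and the bound is entirely pointwise.
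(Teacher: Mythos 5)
Your proposal is correct and matches the paper's argument: the same two-case split on which optimum is larger, followed by evaluating both functions at a single optimizer (the $\arg\max$ of the larger-max function for \eqref{prelim_2_1}, the $\arg\min$ of the smaller-min function for \eqref{prelim_2_2}) to reduce to a pointwise difference. The paper writes out only Case 1 of \eqref{prelim_2_1} and omits \eqref{prelim_2_2} for space, so your version is simply a fuller write-up of the identical route.
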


\begin{proof}
First, we prove \eqref{prelim_2_1} by considering two mutually exclusive cases which cover all possibilities. Case 1: We consider $\max_{x \in \mathcal{X}} f(x) \geq \max_{x \in \mathcal{X}} g(x)$, which implies $|\max_{x \in \mathcal{X}} f(x) - \max_{x \in \mathcal{X}} g(x)| = \max_{x \in \mathcal{X}} f(x) - \max_{x \in \mathcal{X}} g(x)$. Then, we define $x^* := \arg \max_{x \in \mathcal{X}} f(x)$ and note
$\max_{x \in \mathcal{X}} f(x) - \max_{x \in \mathcal{X}} g(x) = f(x^*) - \max_{x \in \mathcal{X}}$ $g(x) \leq f(x^*) - g(x^*) \leq \max_{x \in \mathcal{X}}|f(x) - g(x)|$. Case 2: $\max_{x \in \mathcal{X}} f(x) < \max_{x \in \mathcal{X}} g(x)$. The proof can be completed using similar arguments as in Case 1.
The proof for \eqref{prelim_2_2} follows from similar arguments as \eqref{prelim_2_1}. Due to space limitations, it is omitted.
%Repeat the argument for case 2 where $\max_x g(x) > \max_x f(x)$ with $x^* := \arg\max g(x)$.
%For the second result, consider two cases. In case 1, let $\min_{x} f(x) > \min_{x} g(x)$ and we define $x^* := \arg \min g(x)$. Then, $|\min_{x} f(x) - \min_x g(x)| =  \leq \min_{x} f(x) - g(x^*) \leq f(x^*) - g(x^*) \leq \max_{x}| f(x) - g(x)|$. Repeat the argument for case 2 where $\min_x g(x) > \min_x f(x)$ with $x^* := \arg\min f(x)$.
\end{proof}

\begin{theorem} \label{thm_approx_term_dp}
Let $L_{\hat{V}_{t+1}}$ be the Lipschitz constant of $\hat{V}_{t+1}(\cdot)$ for all $t=0,\dots,T$. Then, for all $m_t \in \mathcal{M}_t$ and $u_t \in \mathcal{U}_t$:
\begin{align}
    |Q_t(m_t, u_t) - \hat{Q}_t(\hat{\sigma}_t(m_t), u_t)| \leq \alpha_t, \label{thm_2_1} \\
    |V_t(m_t) - \hat{V}_t(\hat{\sigma}_t(m_t))| \leq \alpha_t, \label{thm_2_2}
\end{align}
where  $\alpha_T = \epsilon_T$ and $\alpha_t = \alpha_{t+1} + L_{\hat{V}_{t+1}} \cdot \delta_t$ for all $t=0,\dots,T-1$.
\end{theorem}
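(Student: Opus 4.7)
The plan is to establish the two inequalities jointly by backward induction on $t$, starting at $t=T$. The base case follows almost immediately from the definitions: since $Q_T(m_T,u_T) = \max_{x_T \in [[X_T|m_T]]} c_T(x_T,u_T)$ and $\hat{Q}_T(\hat{\sigma}_T(m_T),u_T) = \max_{x_T \in [[X_T|\hat{\sigma}_T(m_T)]]} c_T(x_T,u_T)$, property \eqref{ap1} gives $|Q_T(m_T,u_T) - \hat{Q}_T(\hat{\sigma}_T(m_T),u_T)| \leq \epsilon_T = \alpha_T$. Then minimizing each side over $u_T \in \mathcal{U}_T$ and invoking Lemma \ref{lem_prelim_2}, specifically \eqref{prelim_2_2}, yields $|V_T(m_T) - \hat{V}_T(\hat{\sigma}_T(m_T))| \leq \alpha_T$.

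For the inductive step, suppose $|V_{t+1}(m_{t+1}) - \hat{V}_{t+1}(\hat{\sigma}_{t+1}(m_{t+1}))| \leq \alpha_{t+1}$ holds for every $m_{t+1} \in \mathcal{M}_{t+1}$. The key device is an intermediate quantity that isolates the two distinct sources of approximation error. Specifically, I define $\tilde{Q}_t(m_t,u_t) := \max_{\hat{\pi}_{t+1} \in [[\hat{\Pi}_{t+1}|m_t,u_t]]} \hat{V}_{t+1}(\hat{\pi}_{t+1})$, which uses the \emph{true} conditional range of $\hat{\Pi}_{t+1}$ given the memory, but the \emph{approximate} value function. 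By the triangle inequality, $|Q_t(m_t,u_t) - \hat{Q}_t(\hat{\sigma}_t(m_t),u_t)| \leq |Q_t(m_t,u_t) - \tilde{Q}_t(m_t,u_t)| + |\tilde{Q}_t(m_t,u_t) - \hat{Q}_t(\hat{\sigma}_t(m_t),u_t)|$, and I will bound each term separately.

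For the first term, I would exploit the identity $[[\hat{\Pi}_{t+1}|m_t,u_t]] = \{\hat{\sigma}_{t+1}(m_{t+1}) : m_{t+1} \in [[M_{t+1}|m_t,u_t]]\}$ to rewrite $\tilde{Q}_t(m_t,u_t)$ as a maximum of $\hat{V}_{t+1}(\hat{\sigma}_{t+1}(m_{t+1}))$ over the same index set $[[M_{t+1}|m_t,u_t]]$ used by $Q_t$. Applying \eqref{prelim_2_1} of Lemma \ref{lem_prelim_2} over this common index set, together with the induction hypothesis, bounds this term by $\alpha_{t+1}$. For the second term, both quantities are maxima of the function $\hat{V}_{t+1}(\cdot)$ over two finite subsets of $\hat{\mathcal{P}}_{t+1}$, so I would apply Lemma \ref{lem_prelim} with $f = \hat{V}_{t+1}$ and use property \eqref{ap2} of Definition \ref{def_approx} to bound this term by $L_{\hat{V}_{t+1}} \cdot \mathcal{H}(\mathcal{K}_{t+1},\hat{\mathcal{K}}_{t+1}) \leq L_{\hat{V}_{t+1}} \cdot \delta_t$. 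Summing the two bounds yields $|Q_t(m_t,u_t) - \hat{Q}_t(\hat{\sigma}_t(m_t),u_t)| \leq \alpha_{t+1} + L_{\hat{V}_{t+1}} \cdot \delta_t = \alpha_t$, which establishes \eqref{thm_2_1}. Finally, minimizing both value functions over $u_t \in \mathcal{U}_t$ and applying \eqref{prelim_2_2} from Lemma \ref{lem_prelim_2} transfers the bound to $V_t$ and $\hat{V}_t$, completing the induction.

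The main obstacle is the clean separation of error sources in the inductive step: without introducing the intermediate $\tilde{Q}_t$, the value-function approximation error (controlled by the induction hypothesis) and the conditional-range approximation error (controlled by \eqref{ap2} via the Hausdorff metric and Lemma \ref{lem_prelim}) are entangled and cannot be bounded simultaneously with the right constants. One subtle point to double check is that property \eqref{ap2} in Definition \ref{def_approx} is stated in the Hausdorff metric on $\hat{\mathcal{P}}_{t+1}$, which requires $\hat{V}_{t+1}$ to be Lipschitz with respect to the same underlying metric; the statement of the theorem presumes this Lipschitz constant $L_{\hat{V}_{t+1}}$ is well defined, which is justified by Assumption \ref{assumption_2} since $\hat{\mathcal{P}}_{t+1}$ is finite.
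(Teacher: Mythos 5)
Your proof is correct and follows essentially the same route as the paper: the intermediate quantity $\tilde{Q}_t$ you introduce is precisely the middle term the paper inserts via the triangle inequality, the first piece is bounded by the induction hypothesis together with \eqref{prelim_2_1}, the second by Lemma \ref{lem_prelim} with \eqref{ap2}, and the passage from $Q$ to $V$ uses \eqref{prelim_2_2} exactly as in the paper.
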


\begin{proof} For all $t$, let $m_t \in \mathcal{M}_t$ and $u_t \in \mathcal{U}_t$ be the realizations of $M_t$ and $U_t$, respectively. We prove both results by mathematical induction, starting at time $T$, where \eqref{thm_2_1} follows from \eqref{ap1} in Definition \ref{def_approx}. For \eqref{thm_2_2}, we can expand the LHS as $|V_T(m_T) - \hat{V}_T(\hat{\sigma}_T(m_T))| = |\min_{u_T \in \mathcal{U}_T}Q_T(m_T, u_T) - \min_{u_t \in \mathcal{U}_T}\hat{Q}_T(\hat{\sigma}_T(m_T), u_T)| \leq \max_{u_T \in \mathcal{U}_T} |Q_T(m_T, u_T) - \hat{Q}_T(\hat{\sigma}_T(m_T), u_T)| \leq \epsilon_T$, where in the first inequality, we use \eqref{prelim_2_2} from Lemma \ref{lem_prelim_2} and in the second inequality we use \eqref{thm_2_1}. %This forms the basis of our induction. 
Next, for all $t$, we consider the hypothesis $|V_{t+1}(m_{t+1})- \hat{V}_{t+1}$ $(\hat{\sigma}_{t+1}(m_{t+1}))| \leq \alpha_{t+1}$. Then, $|Q_t(m_t, u_t) - \hat{Q}_t(\hat{\sigma}_t(m_t),u_t)|= \big|\max_{m_{t+1} \in [[M_{t+1}|m_t, u_t]]} V_{t+1}(m_{t+1}) - \max_{\hat{\pi}_{t+1} \in [[\hat{\Pi}_{t+1}|\hat{\sigma}_t(m_t), u_t]]}$ $\hat{V}_{t+1}(\hat{\pi}_{t+1})\big|$. Then,
\begin{align}
    &|Q_t(m_t, u_t) - \hat{Q}_t(\hat{\sigma}_t(m_t), u_t) | \leq \big| \max_{m_{t+1} \in [[M_{t+1}|m_t, u_t]]}  \nonumber \\
    & V_{t+1}(m_{t+1}) - \hspace{-5pt} \max_{\hat{\sigma}_{t+1}(m_{t+1}) \in [[\hat{\Pi}_{t+1}|m_t, u_t]]} \hspace{-7pt} \hat{V}_{t+1}(\hat{\sigma}_{t+1}(m_{t+1})) \big| + \nonumber \\
    &\big|\hspace{-2pt} \max_{\hat{\pi}_{t+1} \in [[\hat{\Pi}_{t+1}|m_t, u_t]]} \hspace{-12pt} \hat{V}_{t+1}(\hat{\pi}_{t+1}) 
    - \hspace{-7pt} \max_{\hat{\pi}_{t+1} \in [[\hat{\Pi}_{t+1}|\hat{\sigma}_t(m_t), u_t]]} \hspace{-19pt} \hat{V}_{t+1}(\hat{\pi}_{t+1}) \big|, \label{proof_2_1}
\end{align}
where, we use the triangle inequality.
In the first term, $\big| \max_{m_{t+1} \in [[M_{t+1}|m_t, u_t]]} V_{t+1}(m_{t+1}) \hspace{-2pt} - \hspace{-2pt} \max_{\hat{\sigma}_{t+1}(m_{t+1}) \in [[\hat{\Pi}_{t+1}|m_t, u_t]]}$ $\hat{V}_{t+1}(\hat{\sigma}_{t+1}(m_{t+1}))$ $\big| \leq \max_{m_{t+1} \in [[M_{t+1}|m_t, u_t]]} \big|V_{t+1}(m_{t+1})
     - \hat{V}_{t+1}(\hat{\sigma}_{t+1}(m_{t+1}))\big|$ $
    \leq \alpha_{t+1},$
where, in the first inequality, we note that $[[\hat{\Pi}_{t+1}|m_t, u_t]] = \{\hat{\sigma}_{t+1}(m_{t+1}) \in \hat{\mathcal{P}}_t | m_{t+1} \in [[M_{t+1}|m_t, u_t]] \}$ and use \eqref{prelim_2_1} from Lemma \ref{lem_prelim_2}; and, in the second inequality, we use the induction hypothesis. 
The second term in the RHS of \eqref{proof_2_1} satisfies $\big|\max_{\hat{\pi}_{t+1} \in [[\hat{\Pi}_{t+1}|m_t, u_t]]} \hat{V}_{t+1}(\hat{\pi}_{t+1}) 
    - \hspace{-5pt} \max_{\hat{\pi}_{t+1} \in [[\hat{\Pi}_{t+1}|\hat{\sigma}_t(m_t), u_t]]} \hat{V}_{t+1}(\hat{\pi}_{t+1}) \big| \leq L_{\hat{V}_{t+1}} \hspace{-4pt} \cdot \delta_t$ using \eqref{eq_prelim} from Lemma \ref{lem_prelim} and \eqref{ap2} from Definition \ref{def_approx}.
Substituting the inequality for each term in the RHS of  \eqref{proof_2_1} yields $|Q_t(m_t, u_t) - \hat{Q}_t(\hat{\sigma}_t(m_t), u_t) | \leq \alpha_{t+1} + L_{\hat{V}_{t+1}} \cdot \delta_t.$
%The proof for \eqref{thm_2_1} at time $t$ is complete by substituting the inequalities for each of the two terms for the RHS of \eqref{proof_2_1}, which yields $|Q_t(m_t, u_t) - \hat{Q}_t(\hat{\sigma}_t(m_t), u_t) | \leq \alpha_{t+1} + L_{\hat{V}_{t+1}} \cdot \delta_t.$
Subsequently, we can prove \eqref{thm_2_2} as $V_t(m_t) - \hat{V}_t(\hat{\sigma}_t(m_t)) = |\min_{u_t \in \mathcal{U}_t}Q_t(m_t, u_t) - \min_{u_t \in \mathcal{U}_t}\hat{Q}_t(\hat{\sigma}_t(m_t), u_t) | \leq \max_{u_t \in \mathcal{U}_t}|Q_t(m_t, u_t) - \hat{Q}_t(\hat{\sigma}_t(m_t), u_t) | \leq \alpha_t$, where, in the first inequality, we use \eqref{prelim_2_2} from Lemma \ref{lem_prelim_2}. This completes our proof by induction for all $t=0,\dots,T$.
\end{proof}

After bounding the approximation error for value functions, we also seek to bound the optimality gap from using the approximate strategy. Consider the approximately optimal strategy $\boldsymbol{\hat{g}} := (\hat{g}_0, \dots, \hat{g}_T)$ with $\hat{g}_t(\hat{\pi}_t) = \arg \min_{u_t \in \mathcal{U}_t} \hat{Q}_t(\hat{\pi}_t, u_t)$ for all $t$. Then, the equivalent strategy $\boldsymbol{g}=(g_0,\dots,g_T)$ using memory has $g_t(m_t) := \hat{g}_t(\hat{\sigma}_t(m_t))$ for all $t$. To compute the performance of $\boldsymbol{g}$, we define for all $t=0,\dots,T-1$, for all $m_t \in \mathcal{M}_t$ and $u_t \in \mathcal{U}_t$:
\begin{align}
\Theta_t(m_t,u_t) &:=  \max_{m_{t+1} \in [[M_{t+1}|m_t,u_t]]} \Lambda_{t+1}(m_{t+1}), \\
\Lambda_t(m_t) &:= \Theta_t(m_t,g_t(m_t)), %= \Theta_t(m_t,\hat{g}_t\big(\hat{\sigma}_t(m_t))\big),
\end{align}
and $\Theta_T(m_T,u_T):= \max_{x_T \in [[X_T|m_T]]}$ $c_T(x_T,u_T)$ and $\Lambda_T(m_T) := \Theta_T(m_T,g_T(m_T))$  for time $T$. Then, because $m_0 = \{y_0\}$, the performance of $\boldsymbol{g}$ is $\Lambda_0(y_0)$ for any $y_0 \in \mathcal{Y}_0$. Next, we bound the difference in the performance of the approximate strategy $\boldsymbol{g}$ and the optimal strategy.

\begin{theorem} \label{thm_approx_term_policy}
Let $L_{\hat{V}_{t+1}}$ be the Lipschitz constant of $\hat{V}_{t+1}(\cdot)$ for all $t=0,\dots,T$. Then, for all $m_t \in \mathcal{M}_t$ and $u_t \in \mathcal{U}_t$,
\begin{align}
    |Q_t(m_t, u_t) - \Theta_t(m_t, u_t)| \leq 2\alpha_t, \label{thm_2_3} \\
    |V_t(m_t) - \Lambda_t(m_t)| \leq 2\alpha_t, \label{thm_2_4}
\end{align}
where  $\alpha_T = \epsilon_T$ and $\alpha_t = \alpha_{t+1} + L_{\hat{V}_{t+1}} \hspace{-4pt} \cdot \delta_t$ for all $t=0,\dots,T-1$.
\end{theorem}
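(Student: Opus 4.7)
The plan is to avoid bounding $|V_t - \Lambda_t|$ and $|Q_t - \Theta_t|$ directly by induction, since the natural recursion for these quantities does not close cleanly. Instead, I will first establish the auxiliary bounds $|\hat{Q}_t(\hat{\sigma}_t(m_t), u_t) - \Theta_t(m_t, u_t)| \leq \alpha_t$ and $|\hat{V}_t(\hat{\sigma}_t(m_t)) - \Lambda_t(m_t)| \leq \alpha_t$ for all $t$, and then combine each of them with Theorem \ref{thm_approx_term_dp} via the triangle inequality to obtain the factor of $2$ appearing in \eqref{thm_2_3}--\eqref{thm_2_4}.

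The auxiliary bounds are proved by backward induction on $t$. For the base case $t=T$, the quantities $\hat{Q}_T(\hat{\sigma}_T(m_T), u_T)$ and $\Theta_T(m_T, u_T)$ are precisely the two expressions compared in \eqref{ap1}, so $|\hat{Q}_T - \Theta_T| \leq \epsilon_T = \alpha_T$ follows immediately. Since $g_T(m_T) = \hat{g}_T(\hat{\sigma}_T(m_T))$ is by definition the minimizer of $\hat{Q}_T(\hat{\sigma}_T(m_T), \cdot)$, both $\hat{V}_T(\hat{\sigma}_T(m_T))$ and $\Lambda_T(m_T)$ are evaluations of $\hat{Q}_T$ and $\Theta_T$ at the same control $g_T(m_T)$, which immediately yields $|\hat{V}_T - \Lambda_T| \leq \alpha_T$.

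For the inductive step, assume the auxiliary bound holds at $t+1$. Expanding $\hat{Q}_t(\hat{\sigma}_t(m_t), u_t)$ and $\Theta_t(m_t, u_t)$ using their recursive definitions and inserting the intermediate term $\max_{\hat{\pi}_{t+1} \in [[\hat{\Pi}_{t+1}|m_t, u_t]]} \hat{V}_{t+1}(\hat{\pi}_{t+1})$ lets me apply the triangle inequality to split the difference into two pieces. The first piece compares two maxima of $\hat{V}_{t+1}$ over $[[\hat{\Pi}_{t+1}|m_t, u_t]]$ and $[[\hat{\Pi}_{t+1}|\hat{\sigma}_t(m_t), u_t]]$; Lemma \ref{lem_prelim} together with the Hausdorff bound \eqref{ap2} controls it by $L_{\hat{V}_{t+1}} \cdot \delta_t$. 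For the second piece, observing that $[[\hat{\Pi}_{t+1}|m_t, u_t]] = \{\hat{\sigma}_{t+1}(m_{t+1}) : m_{t+1} \in [[M_{t+1}|m_t, u_t]]\}$ and invoking Lemma \ref{lem_prelim_2} reduces it to $\max_{m_{t+1}} |\hat{V}_{t+1}(\hat{\sigma}_{t+1}(m_{t+1})) - \Lambda_{t+1}(m_{t+1})|$, which is at most $\alpha_{t+1}$ by the induction hypothesis. Summing yields $\alpha_{t+1} + L_{\hat{V}_{t+1}} \cdot \delta_t = \alpha_t$. The bound on $|\hat{V}_t - \Lambda_t|$ then follows by specializing to $u_t = g_t(m_t)$ exactly as in the base case.

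With both auxiliary bounds in hand, a single triangle inequality finishes the proof: $|Q_t(m_t,u_t) - \Theta_t(m_t,u_t)| \leq |Q_t(m_t,u_t) - \hat{Q}_t(\hat{\sigma}_t(m_t), u_t)| + |\hat{Q}_t(\hat{\sigma}_t(m_t), u_t) - \Theta_t(m_t,u_t)| \leq 2\alpha_t$ by Theorem \ref{thm_approx_term_dp} and the auxiliary bound, and analogously for $|V_t - \Lambda_t|$. The main obstacle is recognizing that $|V_t - \Lambda_t|$ does not itself satisfy a clean backward recursion; only the comparison through the approximate-DP quantity $\hat{V}_t(\hat{\sigma}_t(m_t))$ admits a recursion whose structure matches the definition of $\alpha_t$, precisely because the approximate strategy $\hat{g}_t$ is by construction greedy with respect to $\hat{Q}_t$.
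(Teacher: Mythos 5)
Your proposal is correct and follows essentially the same route as the paper's proof: a triangle inequality through the approximate-DP quantities $\hat{Q}_t(\hat{\sigma}_t(m_t),u_t)$ and $\hat{V}_t(\hat{\sigma}_t(m_t))$, with one term controlled by Theorem \ref{thm_approx_term_dp} and the other by a separate backward induction whose step splits via Lemma \ref{lem_prelim} with \eqref{ap2} and Lemma \ref{lem_prelim_2} with the induction hypothesis. The only cosmetic difference is that the paper introduces auxiliary functions $\hat{\Theta}_t,\hat{\Lambda}_t$ before noting they coincide with $\hat{Q}_t,\hat{V}_t$ by greediness of $\hat{g}_t$, whereas you work with $\hat{Q}_t,\hat{V}_t$ directly.
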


\begin{proof}
%We begin by recursively defining the value of the approximate DP under strategy $\boldsymbol{\hat{g}}$. 
For all $t=0,\dots,T-1$ and for each $\hat{\pi}_t \in \hat{\mathcal{P}}_t$ and $u_t \in \mathcal{U}_t$, let
$\hat{\Theta}_t$ $(\hat{\pi}_t, u_t)$ $ := \max_{\hat{\pi}_{t+1} \in [[\hat{\Pi}_{t+1}|\hat{\pi}_t, u_t]]}$ $ \hat{\Lambda}_{t+1}(\hat{\pi}_{t+1})$ and $\hat{\Lambda}_t(\hat{\pi}_t)$ $:= \hat{\Theta}_t(\hat{\pi}_t, \hat{g}_t(\hat{\pi}_t))$. At $t=T$, $\hat{\Theta}_T(\hat{\pi}_T, u_T) := \max_{x_T \in [[X_T|\hat{\pi}_T]]} c_T(x_T,u_T)$ and $\hat{\Lambda}_{T}$ $(\hat{\pi}_T):= \hat{\Theta}_T(\hat{\pi}_T, \hat{g}_T(\hat{\pi}_T))$.
Note that $\hat{\Theta}_t(\hat{\pi}_t, u_t) = \hat{Q}_{t}(\hat{\pi}_t, u_t)$ and $\hat{\Lambda}_t(\hat{\pi}_t) = \hat{V}_t(\hat{\pi}_t)$ for all $t$ since $\hat{g}_t(\hat{\pi}_t)  =\arg\min_{u_t \in \mathcal{U}_t}\hat{Q}_t(\hat{\pi}_t,u_t)$. Next, at each $t=0,\dots,T$, we use the triangle inequality in the LHS of \eqref{thm_2_3}:
\begin{align}
    |Q_t(m_t, u_t) - &\Theta_t(m_t, u_t)| \leq |Q_t(m_t, u_t) -  \nonumber \\
    \hat{Q}_t(\hat{\sigma}_t(m_t),u_t)|
     &+ |\hat{\Theta}_t(\hat{\sigma}_t(m_t), u_t) - \Theta_t(m_t, u_t)| \nonumber \\
    \leq \alpha_t  + &|\hat{\Theta}_t(\hat{\sigma}_t(m_t), u_t) - \Theta_t(m_t, u_t)|, \label{proof_2_3}
\end{align}
where, in the second inequality, we use \eqref{thm_2_1} from Theorem \ref{thm_approx_term_dp}. 
Next, we prove that $|\hat{\Theta}_t(\hat{\sigma}_t(m_t), u_t) - \Theta_t(m_t, u_t)| \leq \alpha_t$ and $|\hat{\Lambda}_t(\hat{\sigma}_t(m_t)) - \Lambda_t(m_t)| \leq \alpha_t$ for all $t=0,\dots,T$ using backward mathematical induction starting at time $T$.
At time $T$, from \eqref{ap2} in Definition \ref{def_approx}, $|\hat{\Theta}_{T}(\hat{\sigma}_T(m_T), u_T) \hspace{-2pt} - \hspace{-2pt} \Theta_T(m_T, u_T)| \hspace{-2pt} = \hspace{-2pt} |\max_{x_T \in [[X_T|\hat{\sigma}_T(m_T)]]}$ $c_T(x_T,u_T) - \max_{x_T \in [[X_T|m_T]]} c_T(x_T,u_T) | \leq \epsilon_T$. 
Furthermore, $|\hat{\Lambda}_{T}(\hat{\sigma}_T(m_T)) - \Lambda_T(m_T)| = |\hat{\Theta}_{T}(\hat{\sigma}_T(m_T), \hat{g}_T(\hat{\sigma}_T(m_T))) - \Theta_T(m_T, g_T(m_T))| \leq \epsilon_T$, where the inequality holds because $g_T(m_T) = \hat{g}_T(\hat{\sigma}_T(m_T))$. %This forms the basis of our mathematical induction. 
With this as the basis, for any $t=0,\dots,T-1$, we consider the induction hypothesis $|\hat{\Lambda}_{t+1}(\hat{\sigma}_{t+1}(m_{t+1})) - \Lambda_{t+1}(m_{t+1})| \leq \alpha_{t+1}$. Then, using the definitions of the value functions, $|\hat{\Theta}_{t}(\hat{\sigma}_t(m_t), u_t) - \Theta_t(m_t, u_t)| = | \max_{\hat{\pi}_{t+1}\in [[\hat{\Pi}_{t+1}|\hat{\sigma}_t(m_t),u_t]]}\hat{\Lambda}_{t+1}(\hat{\pi}_{t+1}) - \max_{m_{t+1} \in [[M_{t+1}|m_t,u_t]]}\Lambda_{t+1}(m_{t+1}) |$. Next, we expand the RHS using the triangle inequality to write that
\begin{multline}
    \hspace{-2pt}|\hat{\Theta}_{t}(\hat{\sigma}_t(m_t), u_t) - \Theta_t(m_t, u_t)| \leq \big| \max_{\hat{\pi}_{t+1}\in [[\hat{\Pi}_{t+1}|\hat{\sigma}_t(m_t),u_t]]} \\
    \hspace{-1pt} \hat{\Lambda}_{t+1}\hspace{-0.5pt}(\hat{\pi}_{t+1}) \hspace{-1pt} - \hspace{-10pt} \max_{\hat{\pi}_{t+1}\in [[\hat{\Pi}_{t+1}|m_t,u_t]]} \hspace{-9pt} \hat{\Lambda}_{t+1}\hspace{-0.5pt}(\hat{\pi}_{t+1})\hspace{-0.5pt}| + | \hspace{-3pt}\max_{m_{t+1} \in [[M_{t+1}|m_t,u_t]]} \\\Lambda_{t+1}(m_{t+1}) - \max_{\hat{\sigma}_{t+1}(m_{t+1})\in[[\hat{\Pi}_{t+1}|m_t,u_t]]} \hat{\Lambda}_{t+1}(\hat{\sigma}_{t+1}(m_{t+1}))| \\
    \leq L_{\hat{V}_{t+1}} \cdot \delta_t + \alpha_{t+1} = \alpha_t, \label{proof_2_4}
\end{multline}
where, in the second inequality, the first term is upper bounded by noting that $\hat{\Lambda}_{t+1}(\hat{\pi}_{t+1}) = \hat{V}_{t+1}(\hat{\pi}_{t+1})$, using \eqref{eq_prelim} from Lemma \ref{lem_prelim} and then using \eqref{ap2} from Definition \ref{def_approx}, whereas the second term is bounded using the induction hypothesis.
Following the same sequence of arguments as time $T$,  $|\hat{\Lambda}_{t}(\hat{\sigma}_t(m_t)) - \Lambda_t(m_t)| \leq \alpha_t$ as a direct consequence of \eqref{proof_2_4}. This completes the induction. The proof is complete by substituting this result into the RHS of \eqref{proof_2_3} to show \eqref{thm_2_3} and consequently, \eqref{thm_2_4}.
\end{proof}

\subsection{Examples} \label{subsection:approx_examples}

In this subsection, we present two approximate information states which satisfy Definition \ref{def_approx}. They are both inspired by state quantization \cite{bertsekas1975convergence}. Specifically, at any $t=0,\dots,T$, let $\mathcal{X}_t$ be the set of feasible states. Then, a subset $\hat{\mathcal{X}}_t \subset \mathcal{X}_t$ is a \textit{set of quantized states} with parameter $\gamma_t \in \mathbb{R}_{\geq0}$ if $\max_{x_t \in \mathcal{X}_t} \min_{\hat{x_t} \in \hat{\mathcal{X}}_t} d(x_t, \hat{x}_t) \leq \gamma_t$. The corresponding \textit{quantization function} $\mu_t: \mathcal{X}_t \to \hat{\mathcal{X}}_t$ is defined as $\mu_t(x_t) := \arg \min_{\hat{x}_t \in \hat{\mathcal{X}}_t} d(x_t, \hat{x}_t)$. Note that by construction, $d(x_t, \mu_t(x_t)) \leq \gamma_t$ for all $x_t \in \mathcal{X}_t$, for all $t$.

\textit{1) Perfectly Observed Systems:} Consider a system where $Y_t = X_t$ for all $t=0,\dots,T$. Recall from Subsection \ref{subsection:info_examples} that the information state is simply $\Pi_t=X_t$ and it takes values in $\mathcal{X}_t$ for all $t$. Then, an approximate information state for such a system is the quantized state $\hat{\Pi}_t := \mu_t(X_t)$ with $\epsilon_T = 2L_{c_T} \cdot \gamma_T$ and $\delta_t = 2 \gamma_{t+1} + 2  L_{f_t} \cdot \gamma_t$, where $\gamma_{T+1} = 0$, and $L_{c_T}$ and $L_{f_t}$ are the Lipschitz constants for $c_T(\cdot)$ and $f_t(\cdot)$, respectively. The derivation for the values of $\epsilon_T$ and $\delta_t$ can be found in Appendix A.

%Then, we consider a set of points $\hat{\mathcal{X}}_t \subseteq \mathcal{X}_t$ as the set of \textit{quantized states} and define a function $\mu_t: \mathcal{X}_t \to \hat{\mathcal{X}}_t$ which maps each state to its nearest quantizied state, i.e., $\mu_t(x_t) := \arg \min_{\hat{x}_t \in \hat{\mathcal{X}}_t} d(x_t, \hat{x}_t)$. Then, for all $t=0,\dots,T$, for a given $\gamma_t \in \mathbb{R}_{\geq0}$, we select the quantized set $\hat{\mathcal{X}}_t$ to ensure that $\max_{x_t \in \mathcal{X}_t} d(x_t, \mu_t(x_t)) \leq \gamma_t$. Next we show that the resulting quantized state is an approximate information state.

%Then, we can show that $\mu_t(x_t)$ is an approximate information state for all $t=0,\dots,T$ with $\epsilon_t = L_{c_t} \cdot \gamma_t$ and $\delta_t = 2 \gamma_{t+1} + 2L_{f_t} \cdot \gamma_t$, where $\gamma_{T+1} = 0$.

\textit{2) Partially Observed Systems:} For a partially observed system, recall from Section \ref{subsection:info_examples} that an information state is given by the conditional range $\Pi_t = [[X_t|m_t]]$. We approximate the conditional range by quantizing each element in $\Pi_t$. This is generated by the mapping $\nu_t: 2^{\mathcal{X}_t} \to 2^{\hat{\mathcal{X}}_t}$ which yields the approximate range $\nu_t(\Pi_t) := \{\mu_t(x_t) \in \hat{\mathcal{X}}_t|x_t \in \Pi_t\}.$ Then, $\hat{\Pi}_t = \nu_t(\Pi_t)$ is an approximate information state for partially observed systems for all $t=0,\dots,T$ with $\epsilon_T = 2L_{c_T} \cdot \gamma_T$ and $\delta_t = 2 \gamma_{t+1} + 2 L_{\bar{f}_t} \cdot L_{h_{t+1}} \cdot L_{f_t} \cdot \gamma_t$, where $\gamma_{T+1} = 0$, and $L_{c_T}$, $L_{\bar{f}_t}$, $L_{h_{t+1}}$ and $L_{f_t}$ are Lipschitz constants of $c_T(\cdot)$, $\bar{f}_t(\cdot)$, $h_{t+1}(\cdot)$, and $f_t(\cdot)$, respectively.  The derivation of the values of $\epsilon_T$ and $\delta_t$ can be found in Appendix B.

\section{Numerical Example} \label{section:example}

In this section, we present a numerical example illustrating the performance of the approximate conditional range for a gridworld pursuit problem. We consider an agent who seeks to catch a moving target at the end of a time horizon $T$ on $9 \times 9$ grid with static obstacles.
%grid while avoiding static obstacles which restrict their movements. The movements of the targets are uncertain to the agent.
For all $t=0,\dots,T$, we denote the position of the agent by $X_t^\text{ag}$ and the position of the target by $X_t^{\text{ta}}$, each of which takes values in the set of grid cells $\mathcal{X} = \big\{(-4,-4),(-4,-3),\dots,(3,4),(4,4)\big\} \setminus \mathcal{O}$, where $\mathcal{O} \subset \mathcal{X}$ is the set of obstacle cells. Let $\mathcal{U}_t = \mathcal{W}_t = \mathcal{N}_t = \{(-1,0),(1,0),(0,0),(0,1),$ $(0,-1)\}$ for all $t$. Then, starting at $X_0^{\text{ta}} \in \mathcal{X}$, the target's position is updated as $X_{t+1}^{\text{ta}} = \mathbb{I}(X_t^{\text{ta}} + W_t \in \mathcal{X})\cdot(X_t^{\text{ta}} + W_t) + (1 - \mathbb{I}(X_t^{\text{ta}} + W_t \in \mathcal{X}) )\cdot X_t^{\text{ta}}$, where $W_t \in \mathcal{W}_t$ and $\mathbb{I}(\cdot)$ is the indicator function. 
At each $t$, the agent receives an observation of the target $Y_t = \mathbb{I}(X_t^{\text{ta}} + N_t \in \mathcal{X})\cdot(X_t^{\text{ta}} + N_t) + (1-\mathbb{I}(X_t^{\text{ta}} + N_t \in \mathcal{X}) )\cdot X_t^{\text{ta}}$, where $N_t \in \mathcal{N}_t$. The agent observes their own position perfectly. Then, the agent selects an action $U_t \in \mathcal{U}_t$ and updates their position to $X_{t+1}^{\text{ag}} = \mathbb{I}(X_t^{\text{ag}} + U_t \in \mathcal{X})\cdot(X_t^{\text{ag}} + U_t) + (1 - \mathbb{I}(X_t^{\text{ag}} + U_t \in \mathcal{X}) )\cdot X_t^{\text{ag}}$.
The terminal cost after $T$ time steps is $d(X_T^{\text{ta}}, X_T^{\text{ag}})$, where $d(\cdot,\cdot)$ is the shortest path between two cells while avoiding all obstacles. The distance between two adjacent cells is $1$ unit.
The grid and an initial state are illustrated in Fig. \ref{fig:1a}. Here, the black colored cells mark the obstacles. The red triangle and the red circle indicate the position and observation of the agent, respectively, at $t=0$. The red hatched region indicates the possible locations of the target at $t=0$.

%\vspace{-8pt}

%can consider an agent who seeks to track a target as each of them move on a two-dimensional grid. To restrict movements of the agent and the target, we populate the grid with "no-fly" cells and both of them are forbidden to enter these cells. The state of the agent $ X^A_t$ and the state of the target $X^T_t $, for $t=0,\dots,T$ evolves over the set $\mathcal{X}=\{(x_1,x_2) \in \mathbb{Z}^2 | x_i \in \{-4,-3,\dots,3,4 \}, i=1,2 \}\setminus \mathcal{F} $ where $\mathcal{F}$ is the set of "no fly" cells, indicated as black boxes in the grid $(Fig.1)$.

\begin{figure}[ht!]
  \centering
  %\begin{multicols}{2}
  %\captionsetup{justification=centering}
  \subfigure[The original grid \label{fig:1a}]{\includegraphics[width=0.30\linewidth, keepaspectratio]{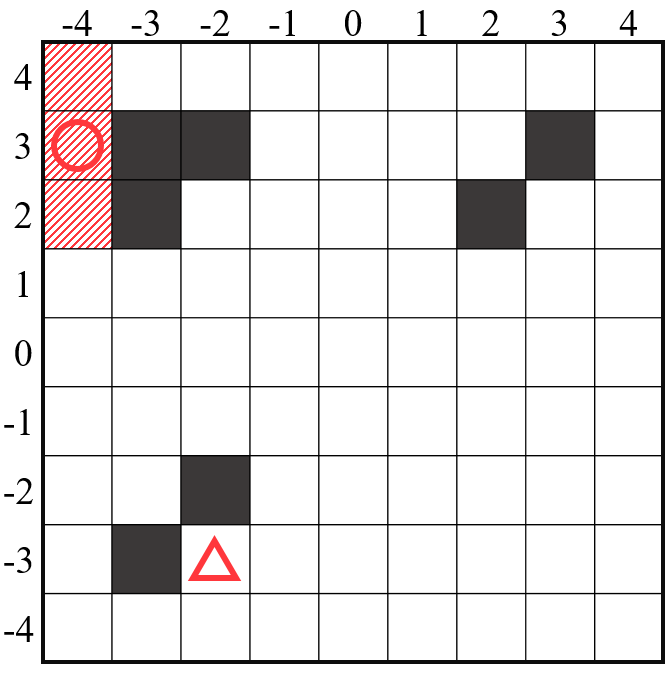}} 
  \subfigure[The quantized grid \label{fig:1b}]{\includegraphics[width=0.30\linewidth, keepaspectratio]{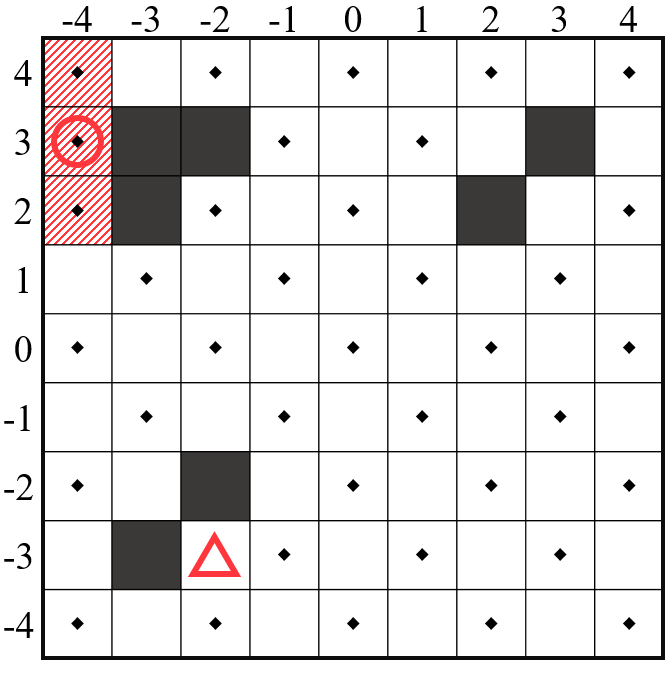}}
  %\end{multicols}
  \vspace{-10pt}
  \caption{The gridworld pursuit problem with the initial conditions $x_0^{\text{ag}} = (-2,-3)$ and $y_0 = (-4,3)$.}
  \label{fig:illustration}
  \vspace{-8pt}
\end{figure}

Recall from Subsection \ref{subsection:info_examples} that an information state at time $t$ is $\Pi_t = \big(X_t^{\text{ag}}, [[X_t^{\text{ta}}|M_t]]\big) \in \mathcal{X} \times 2^{\mathcal{\mathcal{X}}}$. We construct an approximation of the conditional range $[[X_t^{\text{ta}}|M_t]]$ at time $t$ using the quantization approach from Subsection \ref{subsection:approx_examples}. The set of quantized states $\hat{\mathcal{X}}$, with $\gamma_t = 1$ for all $t$, is illustrated in Fig. \ref{fig:1b} by marking the relevant cells with dots. Recall that $\mu_t(x_t) = \arg\min_{\hat{x}_t \in \hat{\mathcal{X}}}d(x_t,\hat{x}_t)$ and the approximate range at time $t$ is $\hat{A}_t  = \big\{ \mu_t(x_t) \in \hat{\mathcal{X}} | x_t \in [[X_t^{\text{ta}}|M_t]] \big\}$. We consider the approximate information state $\hat{\Pi}_t = \big(X_t^{\text{ag}}, \hat{A}_t, Y_0\big) \in \mathcal{X} \times 2^{\hat{\mathcal{X}}} \times \mathcal{X}$ for all $t$.
The initial observation $Y_0$ in $\hat{\Pi}_t$ makes the prediction of $\hat{A}_{t+1}$ more accurate. For five initial conditions, we compute the best control strategy for $T=6$ using both the information state (IS) and the approximate information state (AIS). In Fig. \ref{fig:table}, we present the worst-case costs for both the DPs ($V_0$ and $\hat{V}_0$) and the computational times (Run.) in seconds. 
Note that the approximate DP has a significantly faster runtime. We also implement both strategies on the system with random disturbances. 
In Fig. \ref{fig:table}, we illustrate differences in \textit{actual} costs across 1000 such simulations when implementing the approximate strategy and the optimal strategy. The dots indicate the most frequently observed difference for each case. Note that the difference in costs is bounded.

%In Fig. \ref{fig:table} we display

%Here, the information state $\Pi_t $ is a set that contains elements form the grid $\mathcal{X}$, the corresponding approximate information state $\hat{\Pi}_t$ takes values in the approximate grid $\hat{\mathcal{X}}$, which is given by the set of points marked by a square dot over the original grid, as shown in $Fig. 1$. The agent is represented by a triangle and the target by a circle. We also use the initial observation $Y_0$ to prune the approximate information state $\hat{\Pi_t}$ at each $t$.

%We compare in Table I the time taken to generate a policy and the optimal worst-case performance achieved using information states and approximate information states in the DP, for different realizations of the initial conditions $\{X_0^A, X_0^T, Y_0\}$ through 7 time-steps $t=0,\dots,T=6$. We then provide the results of simulation which compares the performance cost of using information states in the DP to the DP that uses our approximation of the information states. 

\begin{figure}[ht!]
  \centering
  %\captionsetup{justification=centering}
  \includegraphics[width=\linewidth, keepaspectratio]{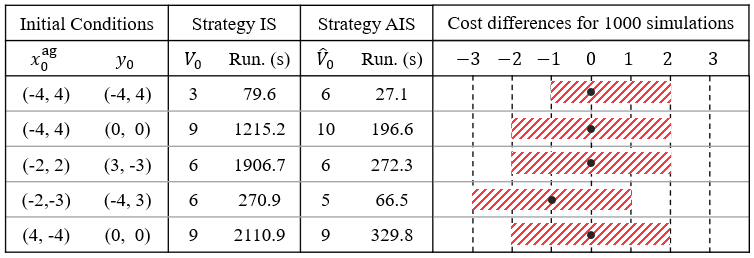}
  \caption{Results of numerical simulations for $T=6$.}
  \label{fig:table}
  \vspace{-12pt}
\end{figure}

\section{Conclusion} \label{section:conclusion}

In this paper, we presented a theory of information states for non-stochastic control problems. We characterized the information states by their properties and proved that they can be used to derive the optimal control strategy. Then, we proposed a definition for an approximate information state which yielded an approximate DP. We provide upper bounds on the approximation error if an agent uses the approximately optimal strategy to generate their control actions.
Future work should consider constructing approximate information states using only partial knowledge of system dynamics. % to facilitate non-stochastic learning and control. % Such an approach would be useful in applications relating to reinforcement learning in non-stochastic systems.

\bibliographystyle{ieeetr}
\bibliography{References}

\section*{Appendix A - Derivation of Approximate Information State for Perfectly Observed Systems}

In this appendix, we derive the values of $\epsilon_T$ and $\delta_t$ for all $t=0,\dots,T$ when an approximate information state is constructed using state quantization for a perfectly observed system. We first state a property of the Hausdorff metric which we will use in our derivation.

\begin{lemma} \label{lem_union_prop}
Let $\mathcal{X}$ be a metric space with finite subsets
$\mathcal{A}, \mathcal{B}, \mathcal{C}, \mathcal{D} \subset \mathcal{X}$. Then, it holds that
\begin{align}
    \mathcal{H}\big(\mathcal{A} \cup \mathcal{B}, \mathcal{C} \cup \mathcal{D}\big)
    \leq \max \Big\{  \mathcal{H}\big(\mathcal{A}, \mathcal{C}\big) , \mathcal{H}\big(\mathcal{B}, \mathcal{D} \big) \Big\}. \label{eq_union_prop}
\end{align}
\end{lemma}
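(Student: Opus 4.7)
The plan is to unpack the Hausdorff metric into its two one-sided suprema and bound each one separately using the elementary fact that taking a minimum over a smaller set can only make the value larger. Since $\mathcal{H}(\cdot,\cdot)$ in \eqref{H_met_def} is defined as a max of two terms, I will show that each term on the LHS of \eqref{eq_union_prop} is bounded by $\max\{\mathcal{H}(\mathcal{A},\mathcal{C}),\mathcal{H}(\mathcal{B},\mathcal{D})\}$, and then take the outer maximum.

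First I would rewrite
\begin{equation*}
\mathcal{H}(\mathcal{A}\cup\mathcal{B},\mathcal{C}\cup\mathcal{D}) = \max\bigl\{T_1, T_2\bigr\},
\end{equation*}
where $T_1 := \max_{z \in \mathcal{A}\cup\mathcal{B}} \min_{w \in \mathcal{C}\cup\mathcal{D}} d(z,w)$ and $T_2 := \max_{w \in \mathcal{C}\cup\mathcal{D}} \min_{z \in \mathcal{A}\cup\mathcal{B}} d(z,w)$. To bound $T_1$, I split the outer maximum according to whether the maximizer lies in $\mathcal{A}$ or in $\mathcal{B}$: $T_1 = \max\{\max_{a\in\mathcal{A}}\min_{w\in\mathcal{C}\cup\mathcal{D}}d(a,w), \max_{b\in\mathcal{B}}\min_{w\in\mathcal{C}\cup\mathcal{D}}d(b,w)\}$. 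For any $a\in\mathcal{A}$, shrinking the inner index set yields $\min_{w\in\mathcal{C}\cup\mathcal{D}}d(a,w) \leq \min_{c\in\mathcal{C}}d(a,c)$, so $\max_{a\in\mathcal{A}}\min_{w\in\mathcal{C}\cup\mathcal{D}}d(a,w) \leq \max_{a\in\mathcal{A}}\min_{c\in\mathcal{C}}d(a,c) \leq \mathcal{H}(\mathcal{A},\mathcal{C})$, using the definition \eqref{H_met_def}. The symmetric bound gives $\max_{b\in\mathcal{B}}\min_{w\in\mathcal{C}\cup\mathcal{D}}d(b,w) \leq \mathcal{H}(\mathcal{B},\mathcal{D})$, so $T_1 \leq \max\{\mathcal{H}(\mathcal{A},\mathcal{C}),\mathcal{H}(\mathcal{B},\mathcal{D})\}$.

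Next I would repeat the same argument for $T_2$ by splitting the outer maximum over $\mathcal{C}\cup\mathcal{D}$, restricting the inner minimum to $\mathcal{A}$ or $\mathcal{B}$ respectively, and invoking the second term in the definition of the Hausdorff metric \eqref{H_met_def}. This gives $T_2 \leq \max\{\mathcal{H}(\mathcal{A},\mathcal{C}),\mathcal{H}(\mathcal{B},\mathcal{D})\}$. Taking the outer maximum of the bounds on $T_1$ and $T_2$ yields \eqref{eq_union_prop}.

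There is essentially no hard step here; the only subtlety worth being careful about is the restriction step $\min_{w\in\mathcal{C}\cup\mathcal{D}}d(a,w) \leq \min_{c\in\mathcal{C}}d(a,c)$, which works because minimizing over a superset can only decrease (or leave unchanged) the value. The rest is bookkeeping: the pairing $\mathcal{A}\leftrightarrow\mathcal{C}$, $\mathcal{B}\leftrightarrow\mathcal{D}$ in the statement is what makes the bound clean; without it (e.g., trying to bound $\mathcal{H}(\mathcal{A}\cup\mathcal{B},\mathcal{C}\cup\mathcal{D})$ by something involving $\mathcal{H}(\mathcal{A},\mathcal{D})$) the argument would fail, so I will be careful to always bound distances from $\mathcal{A}$-points via $\mathcal{C}$ and from $\mathcal{B}$-points via $\mathcal{D}$ in the $T_1$ direction, and analogously for $T_2$.
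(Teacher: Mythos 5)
Your proof is correct. The paper does not actually supply an argument for this lemma---it simply cites \cite[Theorem 1.12.15]{barnsley2006superfractals}---so you have filled in the elementary proof that the reference contains: split each one-sided term of $\mathcal{H}(\mathcal{A}\cup\mathcal{B},\mathcal{C}\cup\mathcal{D})$ over the two pieces of the union, shrink the inner minimization to the paired set (which can only increase the value), and recognize each resulting quantity as one of the two terms in the definition \eqref{H_met_def} of $\mathcal{H}(\mathcal{A},\mathcal{C})$ or $\mathcal{H}(\mathcal{B},\mathcal{D})$. Both directions $T_1$ and $T_2$ are handled, the pairing $\mathcal{A}\leftrightarrow\mathcal{C}$, $\mathcal{B}\leftrightarrow\mathcal{D}$ is used consistently, and no step fails; the only (inherited) nitpick is that the sets should be assumed nonempty for the maxima and minima in \eqref{H_met_def} to be well defined, an assumption the paper's statement also omits.
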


\begin{proof}
The proof for this result is given in \cite[Theorem 1.12.15]{barnsley2006superfractals}.
\end{proof}

Next, recall from Subsection \ref{subsection:approx_examples} that for all $t=0,\dots,T$, the set of quantized states is $\hat{\mathcal{X}}_t \subseteq \mathcal{X}_t$ with a parameter $\gamma_t$. For any state $x_t \in \mathcal{X}_t$ the corresponding quantized state is $\mu_t(x_t) = \arg \min_{\hat{x}_t \in \hat{\mathcal{X}}_t} d(x_t, \hat{x}_t)$ and it holds that $\max_{x_t \in \mathcal{X}_t} d(x_t, \mu_t(x_t)) \leq \gamma_t$. Next, we state and prove the main result of this appendix.

\begin{theorem}
For all $t=0,\dots,T$, let $\mu_t: \mathcal{X}_t \to \hat{\mathcal{X}}_t$ such that $\max_{x_t \in \mathcal{X}_t} d(x_t, \mu_t(x_t)) \leq \gamma_t$ and let $L_{c_T}$, and $L_{f_t}$ be Lipschitz constants for $c_T(\cdot)$ and $f_t(\cdot)$, respectively. Then, for all $t$, $\hat{\Pi}_t = \mu_t(X_t)$ is an approximate information state which satisfies \eqref{ap1} with $\epsilon_T = 2L_{c_T} \cdot \gamma_T$ and \eqref{ap2} with $\delta_t = 2 \gamma_{t+1} + 2 L_{f_t} \cdot \gamma_t$, where $\gamma_{T+1} = 0$.
\end{theorem}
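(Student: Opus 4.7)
The plan is to prove the two properties of Definition \ref{def_approx} separately, exploiting that perfect observations collapse the conditional range to a singleton. Fix any $t$ and any realization $m_t \in \mathcal{M}_t$. Since $Y_\tau = X_\tau$ for all $\tau \leq t$, the memory determines the current state, so $[[X_t \mid m_t]] = \{x_t\}$ for some $x_t \in \mathcal{X}_t$. Moreover, conditioning on $\hat{\sigma}_t(m_t) = \mu_t(x_t)$ yields
\begin{gather*}
    [[X_t \mid \hat{\sigma}_t(m_t)]] = \{x'_t \in \mathcal{X}_t : \mu_t(x'_t) = \mu_t(x_t)\}.
\end{gather*}
For any such $x'_t$, the triangle inequality gives $d(x'_t, x_t) \leq d(x'_t, \mu_t(x'_t)) + d(\mu_t(x_t), x_t) \leq 2\gamma_t$.

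For property \eqref{ap1}, I would apply Lemma \ref{lem_prelim} to $f(\cdot) = c_T(\cdot, u_T)$ on the sets $\{x_T\}$ and $[[X_T \mid \hat{\sigma}_T(m_T)]]$. Since $\{x_T\} \subseteq [[X_T \mid \hat{\sigma}_T(m_T)]]$, the first term in the Hausdorff metric is zero, and the second term is bounded by $\max_{x'_T} d(x_T, x'_T) \leq 2\gamma_T$ by the observation above. Multiplying by $L_{c_T}$ yields the bound $\epsilon_T = 2 L_{c_T} \cdot \gamma_T$.

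For property \eqref{ap2}, I would first write out the two sets explicitly using the dynamics $X_{t+1} = f_t(X_t, U_t, W_t)$:
\begin{align*}
    \mathcal{K}_{t+1} &= \{\mu_{t+1}(f_t(x_t, u_t, w_t)) : w_t \in \mathcal{W}_t\}, \\
    \hat{\mathcal{K}}_{t+1} &= \{\mu_{t+1}(f_t(x'_t, u_t, w_t)) : \mu_t(x'_t) = \mu_t(x_t),\; w_t \in \mathcal{W}_t\}.
\end{align*}
Clearly $\mathcal{K}_{t+1} \subseteq \hat{\mathcal{K}}_{t+1}$ (take $x'_t = x_t$), so the $\max_{\mathcal{K}_{t+1}} \min_{\hat{\mathcal{K}}_{t+1}}$ portion of the Hausdorff metric vanishes. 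For the reverse direction, fix any $\mu_{t+1}(f_t(x'_t, u_t, w_t)) \in \hat{\mathcal{K}}_{t+1}$ and pair it with $\mu_{t+1}(f_t(x_t, u_t, w_t)) \in \mathcal{K}_{t+1}$ using the same $w_t$. Applying the triangle inequality in three steps,
\begin{gather*}
    d\bigl(\mu_{t+1}(f_t(x'_t, u_t, w_t)),\; \mu_{t+1}(f_t(x_t, u_t, w_t))\bigr) \\
    \leq \gamma_{t+1} + L_{f_t} \cdot d(x'_t, x_t) + \gamma_{t+1} \leq 2\gamma_{t+1} + 2 L_{f_t} \cdot \gamma_t = \delta_t,
\end{gather*}
where the two $\gamma_{t+1}$ terms account for the quantization error at time $t+1$ and $L_{f_t} \cdot d(x'_t, x_t) \leq 2L_{f_t}\gamma_t$ follows from the earlier bound.

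The main obstacle will be correctly characterizing $[[X_t \mid \hat{\sigma}_t(m_t)]]$ and the two conditional ranges $\mathcal{K}_{t+1}, \hat{\mathcal{K}}_{t+1}$; once these are written as explicit sets parameterized by $w_t$ (and $x'_t$ in the second case), the key insight is that the two sets can be coupled by sharing the disturbance $w_t$, so only the differences in the state argument and the quantization error at the next step contribute. After that, the bounds follow from direct triangle-inequality manipulations and Lemma \ref{lem_prelim}; no delicate optimization or induction is required.
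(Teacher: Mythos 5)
Your proposal is correct and follows essentially the same route as the paper's Appendix A: both reduce the bound on $\mathcal{H}(\mathcal{K}_{t+1},\hat{\mathcal{K}}_{t+1})$ to a pointwise triangle inequality through the quantization points, $d(\mu_{t+1}(f_t(x'_t,u_t,w_t)),\mu_{t+1}(f_t(x_t,u_t,w_t)))\le \gamma_{t+1}+L_{f_t}d(x'_t,x_t)+\gamma_{t+1}$, after coupling the two conditional ranges by a shared disturbance $w_t$. The only cosmetic difference is that the paper peels off the unions over $w_t$ and $\bar{x}_t$ via the Hausdorff union property (Lemma \ref{lem_union_prop}), whereas you exploit the inclusion $\mathcal{K}_{t+1}\subseteq\hat{\mathcal{K}}_{t+1}$ to kill one side of the metric and bound the other by an explicit pairing; both are valid and yield the same constants.
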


\begin{proof}
For all $t=0,\dots,T$, let $m_t = \{x_{0:t}, u_{0:t-1}\}$ be the realization of $M_t$ and let the approximate information state be $\hat{x}_t = \mu_t(x_t)$. We first derive the value of $\epsilon_t$ in the RHS of \eqref{ap1}. At time $t$, can expand the conditional ranges to write that $[[X_t|m_t]] = [[X_t|x_t]] = \{x_t\}$ and $[[X_t|\hat{x}_t]] = \{x_t \in \mathcal{X} : d(x_t, \hat{x}_t) \leq \gamma_t \}$. 
On substituting these into the LHS of \eqref{ap1}, we state that
\begin{align}
    &\big|\max_{\bar{x}_T \in [[X_T|m_T]]} c_T(\bar{x}_T, u_T) - \max_{\bar{x}_T \in [[X_T|\mu_T(x_T)]]} c_T(\bar{x}_T,u_T)\big| \nonumber \\
    = &\big|c_T(x_T,u_T) - \max_{\bar{x}_T \in [[X_T|\mu_T(x_T)]]}c_T(\bar{x}_T,u_T)\big| \nonumber \\
    \leq &\max_{\bar{x}_T \in [[X_T|\mu_T(x_T)]]} |c_T(x_T,u_T) - c_T(\bar{x}_T,u_T)| \nonumber \\
    \leq &L_{c_T} \cdot \max_{\bar{x}_T \in [[X_T|\mu_T(x_T)]]} d(x_T, \bar{x}_T) \nonumber \\
    \leq &L_{c_T}\cdot\Big(d(x_T, \mu_T(x_T)) + \max_{\bar{x}_T \in [[X_T|\mu_T(x_T)]]}d(\mu_T(x_T), \bar{x}_T)\Big), \nonumber \\
    \leq &2 L_{c_T} \cdot \gamma_T =: \epsilon_T,
\end{align}
where, in the third inequality, we use the triangle inequality.

Next, to derive the value of $\delta_t$ in the RHS of \eqref{ap2}, we expand the LHS of \eqref{ap2} as
\begin{align}
    &\mathcal{H}\big([[\hat{X}_{t+1}|x_t, u_t]],[[\hat{X}_{t+1}|\mu_t(x_t),u_t]]\big) \nonumber \\
    = &\mathcal{H}\Big( \big\{ \mu_{t+1}(f_t(x_t,u_t,w_t))| w_t \in \mathcal{W}_t \big\},  \nonumber \\
    &\big\{ \mu_{t+1}(f_t(\bar{x}_t,u_t,w_t))| \bar{x}_t \in [[X_t|\mu_t(x_t)]], w_t \in \mathcal{W}_t \big\}\Big) \nonumber \\
    \leq &\max_{w_t \in \mathcal{W}_t} \mathcal{H}(\{ \mu_{t+1}(f_t(x_t,u_t,w_t))\},  \nonumber \\
    &\{ \mu_{t+1}(f_t(\bar{x}_t,u_t,w_t))| \bar{x}_t \in [[X_t|\mu_t(x_t)]]\}), \label{ap_a_1}
\end{align} 
where, in the inequality, we use \eqref{eq_union_prop} from Lemma \ref{lem_union_prop} and the fact that $\big\{ \mu_{t+1}(f_t(x_t,u_t,w_t))| w_t \in \mathcal{W}_t \big\} = \cup_{w_t \in \mathcal{W}_t} \big\{\mu_{t+1}(f_t(\bar{x}_t,u_t,w_t))\big\}$. Once again using \eqref{eq_union_prop} in the RHS of \eqref{ap_a_1}, we state that
\begin{align}
    &\mathcal{H}\big([[\hat{X}_{t+1}|x_t, u_t]],[[\hat{X}_{t+1}|\mu_t(x_t),u_t]]\big) \nonumber \\
    \leq &\max_{w_t \in \mathcal{W}_t, \bar{x}_t \in [[X_t|\mu_t(x_t)]]} d\Big(\mu_{t+1}\big(f_t(x_t,u_t,w_t)\big), \nonumber \\ &\mu_{t+1}\big(f_t(\bar{x}_t,u_t,w_t)\big)\Big) \nonumber \\
    \leq &\hspace{-5pt}\max_{w_t \in \mathcal{W}_t, \bar{x}_t \in [[X_t|\mu_t(x_t)]]} \Big(d\big(\mu_{t+1}(f_t(x_t,u_t,w_t)), f_t(x_t,u_t, \nonumber \\
    &w_t)\big) 
    + d\big(f_t(x_t, u_t, w_t), f_t(\bar{x}_t,u_t,w_t)\big) \nonumber \\
    &+ d\big(f_t(\bar{x}_t,u_t,w_t), \mu_{t+1}(f_t(\bar{x}_t,u_t,w_t))\big)\Big) \nonumber \\
    \leq &\gamma_{t+1} + 2L_{f_t} \cdot \gamma_t + \gamma_{t+1} =: \delta_t,
\end{align}
where, in the second inequality, we use the triangle inequality.
\end{proof}

\section*{Appendix B - Derivation of Approximate Information State for Partially Observed Systems}

In this appendix, we derive the values of $\epsilon_t$ and $\delta_t$ for all $t=0,\dots,T$, when an approximate information state is constructed using state quantization for a partially observed system. Recall from Section \ref{subsection:approx_examples} that given a set of quantized states $\hat{X}_t$ with parameter $\gamma_t$, quantization function $\mu_t: \mathcal{X}_t \to \hat{\mathcal{X}}_t$ and the conditional range $\Pi_t = [[X_t|M_t]]$ for all $t=0,\dots,T$, the approximate information state is given by $\hat{\Pi}_t = \nu_t(\Pi_t) = \big\{\mu_t(x_t) \in \hat{\mathcal{X}}_t|x_t \in \Pi_t\big\}$. Furthermore, it holds that $\max_{x_t \in \mathcal{X}_t} d(x_t, \mu_t(x_t)) \leq \gamma_t$ for all $t$. Note that the conditional range evolves as $\Pi_{t+1} = \bar{f}_t(\Pi_t, U_t, Y_{t+1})$ for all $t$ \cite{gagrani2017decentralized, gagrani2020worst}. Next, we state and prove the main result of this appendix.

\begin{theorem}
For all $t=0,\dots,T,$ let $\mu_t: \mathcal{X}_t \to \hat{\mathcal{X}}_t$ such that $\max_{x_t \in \mathcal{X}_t}d(x_t, \mu_t(x_t)) \leq \gamma_t$ and let $L_{c_T}$, $L_{\bar{f}_t}$, $L_{h_{t+1}}$, and $L_{f_t}$ be Lipschitz constants of the respective functions in the subscripts. Then, at time $t$, $\hat{\Pi}_t = \nu_t(\Pi_t)$ is an approximate information state with $\epsilon_T = 2L_{c_T} \cdot \gamma_T$ and $\delta_t = 2 \gamma_{t+1} + 2 L_{\bar{f}_t} \cdot L_{h_{t+1}} \cdot L_{f_t} \cdot \gamma_t$, where $\gamma_{T+1} = 0$.
\end{theorem}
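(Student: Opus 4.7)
The plan is to handle the two parameters $\epsilon_T$ and $\delta_t$ separately, mirroring the structure of Appendix A, but accommodating the fact that the information state here is a set rather than a single state. Throughout, the workhorse is that the quantization $\mu_t$ satisfies $d(x_t,\mu_t(x_t))\le \gamma_t$ pointwise, and its set-valued lift $\nu_t$ inherits a Hausdorff-closeness property: $\mathcal{H}(\Pi_t,\nu_t(\Pi_t))\le \gamma_t$ for every conditional range $\Pi_t\subseteq\mathcal{X}_t$.

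To establish $\epsilon_T=2L_{c_T}\cdot\gamma_T$, I would first argue that $\mathcal{H}\bigl([[X_T\mid m_T]],[[X_T\mid\hat{\sigma}_T(m_T)]]\bigr)\le 2\gamma_T$. One direction is immediate, since $\Pi_T=[[X_T\mid m_T]]\subseteq[[X_T\mid\hat{\sigma}_T(m_T)]]$. For the reverse direction, for any $x_T\in[[X_T\mid\hat{\sigma}_T(m_T)]]$ there exists a compatible memory $m_T'$ with $\nu_T([[X_T\mid m_T']])=\nu_T(\Pi_T)$, which forces $\mu_T(x_T)\in\nu_T(\Pi_T)$; hence there exists $x_T'\in\Pi_T$ with $\mu_T(x_T')=\mu_T(x_T)$, and the triangle inequality gives $d(x_T,x_T')\le 2\gamma_T$. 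Applying Lemma~\ref{lem_prelim} with $f(\cdot)=c_T(\cdot,u_T)$ and the Lipschitz constant $L_{c_T}$ then yields the desired $\epsilon_T$.

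To establish $\delta_t=2\gamma_{t+1}+2 L_{\bar{f}_t}\cdot L_{h_{t+1}}\cdot L_{f_t}\cdot\gamma_t$, I would expand both sides using the update $\hat{\Pi}_{t+1}=\nu_{t+1}\bigl(\bar{f}_t(\Pi_t,U_t,Y_{t+1})\bigr)$. Writing
\begin{align*}
\mathcal{K}_{t+1} &= \bigl\{\nu_{t+1}(\bar{f}_t(\Pi_t,u_t,y_{t+1})) : y_{t+1}\in[[Y_{t+1}\mid m_t,u_t]]\bigr\}, \\
\hat{\mathcal{K}}_{t+1} &= \bigcup_{m_t'\in\hat{\sigma}_t^{-1}(\hat{\sigma}_t(m_t))} \bigl\{\nu_{t+1}(\bar{f}_t([[X_t\mid m_t']],u_t,y_{t+1})) : y_{t+1}\in[[Y_{t+1}\mid m_t',u_t]]\bigr\},
\end{align*}
I would insert the intermediate family obtained from $m_t$ but without the outer quantization, and apply the Hausdorff triangle inequality. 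One term is absorbed by the $2\gamma_{t+1}$ contribution coming from the two applications of $\nu_{t+1}$ at either end (using $\mathcal{H}(A,\nu_{t+1}(A))\le\gamma_{t+1}$). The other term is controlled by pushing the disagreement in $x_t\in\Pi_t$ vs.\ $x_t'\in[[X_t\mid\hat{\sigma}_t(m_t)]]$ through the composition $x_t\mapsto f_t(x_t,u_t,w_t)\mapsto h_{t+1}(\cdot,n_{t+1})\mapsto \bar{f}_t(\Pi_t,u_t,\cdot)$, invoking Lemma~\ref{lem_union_prop} to commute Hausdorff with the unions over $w_t$ and $n_{t+1}$ and the Lipschitz constants $L_{f_t}, L_{h_{t+1}}, L_{\bar{f}_t}$ in succession. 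The analogue of the $\epsilon_T$ argument shows the ``radius of confusion'' in $x_t$ is $2\gamma_t$, which then gets multiplied by the Lipschitz chain.

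The main obstacle I expect is the bookkeeping in the second step: the preimage $\hat{\sigma}_t^{-1}(\hat{\sigma}_t(m_t))$ enlarges both the admissible conditional ranges $\Pi_t'$ and the admissible observation sets $[[Y_{t+1}\mid m_t',u_t]]$, and I must verify, analogously to the $\epsilon_T$ argument, that every element appearing on the approximate side is within $2\gamma_t$ (in the appropriate sense) of something on the true side before applying the Lipschitz chain. Once that containment is in place, collecting the two $\gamma_{t+1}$ terms from the quantizations with the $2L_{\bar{f}_t}L_{h_{t+1}}L_{f_t}\gamma_t$ term from the Lipschitz chain produces the stated $\delta_t$.
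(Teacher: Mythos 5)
Your plan is correct and follows essentially the same route as the paper's Appendix B: bound the Hausdorff distance between the true and quantized conditional ranges by $2\gamma_T$ and apply Lemma~\ref{lem_prelim} for $\epsilon_T$, then for $\delta_t$ use the triangle inequality to peel off two $\gamma_{t+1}$ quantization terms and push the $2\gamma_t$ discrepancy through the Lipschitz chain $L_{\bar{f}_t}L_{h_{t+1}}L_{f_t}$ via Lemma~\ref{lem_union_prop}. The only cosmetic difference is that you unpack $[[X_T\mid\hat{\sigma}_T(m_T)]]$ as a union over memories in the preimage of $\hat{\sigma}_T$, whereas the paper works directly with the $\gamma_T$-neighborhood of $\nu_T(P_T)$ and with the set $[[\Pi_t\mid\nu_t(P_t)]]$ of ranges $\tilde{P}_t$ satisfying $\nu_t(\tilde{P}_t)=\nu_t(P_t)$; both yield the same bounds.
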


\begin{proof}
For all $t=0,\dots,T$, let $m_t$, $P_t = [[X_t|m_t]]$, and $\hat{P}_t = \nu_t(P_t)$ be the realizations of the memory $M_t$, the conditional range $\Pi_t$ and the approximate information state $\hat{\Pi}_t$, respectively. Note that the conditional range $P_t$ satisfies \eqref{p1} and \eqref{p2} from Definition \ref{def_info_state}. To derive the value of $\epsilon_T$, we write the LHS of \eqref{ap1} using \eqref{p1} as
\begin{align}
    &\big|\max_{x_T \in [[X_T|m_T]]} c_T(x_T, u_T) - \max_{x_T \in [[X_T|\nu_T({P}_T)]]}c_T(x_T,u_T)\big| \nonumber \\
    =&\big|\max_{x_T \in P_T} c_T(x_T, u_T) - \max_{x_T \in [[X_T|\nu_t({P}_T)]])}c_T(x_T,u_T)\big| \nonumber \\
    \leq &L_{c_T} \cdot \mathcal{H}(P_T, [[X_T|\nu_t({P}_T)]]) \nonumber \\
    \leq &L_{c_T} \cdot \big(\mathcal{H}(P_T, \nu_T(P_T)\big) + \mathcal{H}\big(\nu_T(P_T), [[X_T|\nu_t({P}_T)]]) \big), \label{ap_b_1}
\end{align}
where, in the eqaulity, we use \eqref{p1}; in the first inequality, we use \eqref{eq_prelim} from Lemma \ref{lem_prelim}; and in the second inequality we use the triangle inequality for the Hausdorff metric. We can expand the first term in the RHS of \eqref{ap_b_1} as $\mathcal{H}(P_T, \nu_T(P_T)) = \mathcal{H}(P_T, \{\mu_T(x_T) \in \hat{\mathcal{X}}_T\;| \; x_T \in P_T\})$ $= \mathcal{H}\big(\cup_{x_T \in P_T} \{x_T\}, \cup_{x_T \in P_T}\{\mu_T(x_T) \in \hat{\mathcal{X}}_T\}\big) \leq \max_{x_T \in P_T}$ $d(x_T, \mu_T(x_T)) \leq \gamma_T$, where we use \eqref{eq_union_prop} from Lemma \ref{lem_union_prop} in the first inequality. We can also expand the second term in the RHS of \eqref{ap_b_1} as $\mathcal{H}(\nu_T(P_T), [[X_T|\nu_T({P}_T)]]) ) = \mathcal{H}\big(\nu_T(P_T), \{x_T \in \mathcal{X}_T| \min_{\bar{x}_T \in \nu_T(P_T)} d(x_T,\bar{x}_T) \leq \gamma_T \} \big) = \max_{x_T \in [[X_T|\nu_T(P_T)]]} \min_{\bar{x}_T \in \nu_T(P_T)} d(x_T, \bar{x}_T) \leq \gamma_T$, where the second equality holds by expanding the Hausdorff metric and noting that $\nu_T(P_T) \subseteq [[X_T|\nu_T(P_T)]]$. The proof is complete by substituting the results for both terms in the RHS of \eqref{ap_b_1}.

Next, to derive the value of $\delta_t$, we note that $P_t = \sigma_t(m_t)$. Then, we use the triangle inequality in the LHS of \eqref{ap2} to write that
\begin{align}
& \mathcal{H}\Big([[\nu_{t+1}(\Pi_{t+1})|m_t,u_t]],[[\nu_{t+1}(\Pi_{t+1})|\nu_t(\sigma_t(m_t)),u_t]]\Big) \nonumber \\
\leq & \mathcal{H}\Big([[\nu_{t+1}(\Pi_{t+1})|m_t,u_t]],[[\Pi_{t+1}|m_t,u_t]]\Big) + \nonumber \\
&\mathcal{H}\Big([[\Pi_{t+1}|m_t,u_t]],[[\Pi_{t+1}|\nu_t(\sigma_t(m_t)),u_t]] \Big) + \nonumber \\
& \mathcal{H}\Big([[\Pi_{t+1}|\nu_t(\sigma_t(m_t)),u_t]],[[\nu_{t+1}(\Pi_{t+1})|\nu_t(\sigma_t(m_t)),u_t]]\Big) \nonumber \\
\leq & 2\gamma_{t+1} + \mathcal{H}\Big([[\Pi_{t+1}|m_t,u_t]],[[\Pi_{t+1}|\nu_t(\sigma_t(m_t)),u_t]]\Big), \label{ap_b_2}
\end{align}
where, in the second inequality we use the face that $\mathcal{H}\big(P_{t+1}, \nu_{t+1}(P_{t+1})\big) \leq \gamma_{t+1}$, which was proved above. We can write the second term in the RHS of \eqref{ap_b_2} using \eqref{p2} from Definition \ref{def_info_state} as $\mathcal{H}\big([[\Pi_{t+1}|m_t,u_t]],[[\Pi_{t+1}|\nu_t(\sigma_t(m_t)),u_t]]\big) = \mathcal{H}\big([[\Pi_{t+1}|P_t,u_t]],[[\Pi_{t+1}|\nu_t(P_t),u_t]]\big)$. Furthermore, note that $[[\Pi_{t+1}|\nu_t(P_t), u_t]] = \big\{ \tilde{P}_{t+1} \in [[\Pi_{t+1}|\tilde{P}_t,u_t]]$ $| \tilde{P}_t \in [[\Pi_t|\nu_t(P_t)]] \big\} =  \cup_{\tilde{P}_t \in [[\Pi_t|\nu(P_t)]]} [[\Pi_{t+1}|\tilde{P}_t,u_t]]$. Next, we use \eqref{eq_union_prop} from Lemma \ref{lem_union_prop} to write that
\begin{align}
    & \mathcal{H}\Big([[\Pi_{t+1}|P_t,u_t]]),[[\Pi_{t+1}|\nu_t(P_t),u_t]]\Big) \nonumber \\
    \leq & \max_{\tilde{P}_t \in [[\Pi_t|\nu_t(P_t)]]}\mathcal{H}\Big([[\Pi_{t+1}|P_t,u_t]]),[[\Pi_{t+1}|\tilde{P}_t,u_t]]\Big) \nonumber \\
    \leq & L_{\bar{f}_t} \max_{\tilde{P}_t \in [[\Pi_t|\nu_t(P_t)]]}\mathcal{H}\Big([[Y_{t+1}|P_t,u_t]]),[[Y_{t+1}|\tilde{P}_t,u_t]]\Big) \nonumber \\
    \leq & L_{\bar{f}_t}  \hspace{-2pt} \cdot \hspace{-2pt} L_{h_{t+1}} \hspace{-3pt} \max_{\tilde{P}_t \in [[\Pi_t|\nu_t(P_t)]]} \hspace{-12pt} \mathcal{H}\Big([[X_{t+1}|P_t,u_t]]),[[X_{t+1}|\tilde{P}_t,u_t]]\Big), \label{ap_b_3}
\end{align}
where, the third inequality can be proven by substituting $Y_{t+1} = h_{t+1}(X_{t+1},V_{t+1})$ into the equation. We can further expand the third term in the RHS of \eqref{ap_b_3} and use \eqref{eq_union_prop} from Lemma \ref{lem_union_prop} to write that
\begin{align}
    & \max_{\tilde{P}_t \in [[\Pi_t|\nu_t(P_t)]]} \mathcal{H}\Big([[X_{t+1}|P_t,u_t]]),[[X_{t+1}|\tilde{P}_t,u_t]]\Big) \nonumber \\
    \leq &  \max_{\tilde{P}_t \in [[\Pi_t|\nu_t(P_t)]], w_t \in \mathcal{W}_t} \mathcal{H}\Big(  \{f_t(x_t,u_t,w_t)|x_t\in P_t\}, \nonumber \\
    & \{f_t(x_t,u_t,w_t)|x_t\in \tilde{P}_t\} \Big) \nonumber \\
    \leq &L_{f_t} \cdot  \max_{\tilde{P}_t \in [[\Pi_t|\nu_t(P_t)]]} \mathcal{H}\big(P_t, \tilde{P}_t\big) \nonumber \\
    \leq &L_{f_t} \cdot  \max_{\tilde{P}_t \in [[\Pi_t|\nu_t(P_t)]]} \big(\mathcal{H}(P_t, \nu_t(P_t)\big) + \mathcal{H}\big(\nu_t(P_t), \tilde{P}_t)\big) \nonumber \\
    \leq &2 L_{f_t} \cdot \gamma_t,
\end{align}
where, in the third inequality, we use the triangle inequality and in the fourth inequality we use the fact that for all $\nu_t(\tilde{P}_t) = \nu_t(P_t)$, for all $\tilde{P}_t \in [[\Pi_t|\nu_t(P_t)]]$. 
\end{proof}

\end{document}